\documentclass[12pt,reqno]{amsart}
\usepackage{amssymb,amsmath,mathrsfs,xcolor}
\usepackage[left=2.9cm,right=2.9cm,top=2.8cm,bottom=2.8cm]{geometry}

\newcommand{\N}{\mathbb{N}}
\newcommand{\R}{\mathbb{R}}
\newcommand{\dx}{\, {\rm d} x}
\newcommand{\ds}{\, {\rm d} \sigma}
\newcommand{\cone}{{\rm int}(C_+)}
\renewcommand{\epsilon}{\varepsilon}
\renewcommand{\phi}{\varphi}
\DeclareMathOperator{\esssup}{\rm ess\,sup}

\newtheorem{defi}{Definition}[section]
\newtheorem{prop}[defi]{Proposition}
\newtheorem{lemma}[defi]{Lemma}
\newtheorem{thm}[defi]{Theorem}

\newtheorem{rmk}[defi]{Remark}

\numberwithin{equation}{section}

\begin{document}
\title[Multiple solutions to quasi-linear elliptic Robin systems]{Multiple solutions to\\ quasi-linear elliptic Robin systems}
\author[U. Guarnotta]{U. Guarnotta}
\address[U. Guarnotta]{Dipartimento di Matematica e Informatica, Universit\`a degli Studi di Palermo, Via Archirafi 34, 90123 Palermo, Italy}
\email{umberto.guarnotta@unipa.it}
\author[S.A. Marano]{S.A. Marano}
\address[S.A. Marano]{Dipartimento di Matematica e Informatica, Universit\`a degli Studi di Catania, Viale A. Doria 6, 95125 Catania, Italy}
\email{marano@dmi.unict.it}
\author[A. Moussaoui]{A. Moussaoui}
\address[A. Moussaoui]{LMA, Faculty of Exact Sciences, A. Mira Bejaia University, 06000 Bejaia, Algeria}
\email{abdelkrim.moussaoui@univ-bejaia.dz}
\maketitle

\begin{abstract}
Two opposite constant-sign solutions to a non-variational $p$-Laplacian system with Robin boundary conditions are obtained via sub-super-solution techniques. A third nontrivial one comes out by means of topological degree arguments. 
\end{abstract}

{
\let\thefootnote\relax
\footnote{{\bf{MSC 2020}}: 35J57, 35J92.}
\footnote{{\bf{Keywords}}: Quasi-linear elliptic system, Robin problem, multiple solutions, sub-super-solution, Leray-Schauder degree.}
\footnote{Corresponding author: S.A. Marano.}
}
\setcounter{footnote}{0}

\section{Introduction}
Multiplicity of solutions to boundary-value problems for elliptic systems looks an interesting topic, both from a theoretical point of view and as regards applications. In fact, even without convective terms, the problem doesn't usually have a variational structure (which forces very special reactions) and so abstract results different from mountain-pass like theorems are needed. The most common attempt is using fixed point results or some kind of topological degree. Recently, this was accomplished in the paper \cite{MMPe}, that investigates the Dirichlet problem
\begin{equation}\label{prob0}
\left\{
\begin{alignedat}{2}
-\Delta_{p_1} u_1 &= f_1(x,u_1,u_2)\quad &&\mbox{in}\;\;\Omega, \\
-\Delta_{p_2} u_2 &= f_2(x,u_1,u_2)\quad &&\mbox{in} \;\;\Omega, \\
u_1 = u_2&=0\quad &&\mbox{on} \;\;\partial \Omega,
\end{alignedat}
\right.
\end{equation}
where $p_i\in(1,+\infty)$, the symbol $\Delta_{p_i}$ denotes the $p_i$-Laplacian, i.e.,
\begin{equation*}
\Delta_{p_i}u:={\rm div}(|\nabla u|^{p_i-1}\nabla u)\quad\forall\, u\in W^{1,p_i}(\Omega),
\end{equation*}
$\Omega$ is a bounded domain in $\R^N$, $N\geq 2$, with smooth boundary $\partial\Omega$, and $f_i:\Omega\times\R^2\to\R$, $i=1,2$, satisfy Carath\'eodory's conditions. Two opposite constant-sign solutions were obtained via the sub-super-solution method for systems; see, e.g., \cite[Section 5.5]{CLM}. Then appropriately using the Leray-Schauder degree gave a third nontrivial (precisely, nodal) solution. Chapter 6 of the monograph \cite{Mo} represents an excellent introduction to these topics, while \cite{DM} provides a singular version of \cite{MMPe}. Recall that \eqref{prob0} is called singular when for instance $f_i:\Omega\times(\R^+)^2\to\R^+$ and $\lim_{s_i\to 0^+}f_i(x,s_1,s_2)=+\infty$; vide the very recent survey \cite{GLM}.

A quick search in the Mathematical Reviews shows that, until today, there are about forty papers on Dirichlet problem, sixteen dealing with the Neumann case, and only one, namely \cite{Am}, devoted to Robin boundary conditions. From a technical point of view, most of them employ variational methods, whereas a few are based on topological degree arguments. This work investigates the non-variational Robin system
\begin{equation}\label{prob}
\tag{P}
\left\{
\begin{alignedat}{2}
-\Delta_{p_1} u_1 &= f_1(x,u_1,u_2) \quad &&\mbox{in} \;\; \Omega, \\
-\Delta_{p_2} u_2 &= f_2(x,u_1,u_2) \quad &&\mbox{in} \;\; \Omega, \\
|\nabla u_1|^{p_1-2} \nabla u_1 \cdot \nabla \nu &= -\beta_1 |u_1|^{p_1-2}u_1  \quad &&\mbox{on} \;\; \partial \Omega, \\
|\nabla u_1|^{p_2-2} \nabla u_2 \cdot \nabla \nu &= -\beta_2 |u_2|^{p_2-2}u_2  \quad &&\mbox{on} \;\; \partial \Omega,
\end{alignedat}
\right.
\end{equation}
where $\nu(x)$ indicates the outward unit normal vector to $\partial\Omega$ at its point $x$ while $\beta_i>0$. We adapt the approach of \cite{MMPe}, which basically requires four tools:
\begin{itemize}
\item some natural properties (cf. Proposition \ref{properties}) of the involved differential operator;
\item a non-existence result (Proposition \ref{nonex}) for a family of auxiliary Robin problems;
\item the anti-maximum principle (Proposition \ref{antimax}) with Robin boundary conditions;
\item a sub-super-solution result (Theorem \ref{subsuperthm}).
\end{itemize}
Under suitable assumptions (see Section \ref{results}), two opposite constant-sign smooth solutions to \eqref{prob} are obtained in Theorem \ref{constantsign}. A third nontrivial solution then comes out through topological degree arguments; cf. Theorem \ref{thirdsol}. Unfortunately, we were not able to show that it is nodal.

Multiplicity results for elliptic Robin \textit{equations} with complete sign information on the solutions have been established by several authors. Usually, critical point theory, truncation-perturbation techniques, and Morse identity produce at least three non-zero solutions; two of them have opposite constant sign and the third one is nodal. A recent reference work on this topic is \cite{PW}.
\section{Preliminaries}
Let $(U,\|\cdot\|)$ be a real Banach space and let $U^*$ be its topological dual, with duality brackets $\langle\cdot, \cdot\rangle$. An operator $A:U\to U^*$ is said to be:
\begin{itemize}
\item \emph{bounded} if it maps bounded sets into bounded sets.
\item \emph{coercive} provided $\displaystyle{\lim_{\Vert u\Vert\to+\infty}} \frac{\langle A(u),u\rangle}{\Vert u\Vert}=+\infty$.
\item \emph{pseudo-monotone} if $u_n\rightharpoonup u$ in $U$ and $\displaystyle{\limsup_{n\to+\infty}}\langle A(u_n),u_n-u\rangle\leq 0$ force
$$\displaystyle{\liminf_{n\to+\infty}}\langle A(u_n),u_n-v\rangle\geq\langle A(u),u-v\rangle\quad\forall\, v\in U.$$
\item \emph{of type $(\mathrm{S})_+$} provided
\begin{equation*}
u_n\rightharpoonup u\;\;\mbox{in $U$,}\;\;\limsup_{n\to+\infty}\langle A(u_n),u_n-u\rangle\leq 0\implies u_n\to u\;\;\mbox{in $U$.}   
\end{equation*}
\end{itemize}
Recall that 
\begin{equation*}
\mbox{$A$ continuous and of type $(\mathrm{S})_+$}\implies
\mbox{$A$ pseudo-monotone.}    
\end{equation*}
Actually, demi-continuity suffices; vide \cite[Lemma 6.7]{F}. Moreover, one has (see, e.g., \cite[Theorem 2.99]{CLM})
\begin{thm}\label{thmbrezis}
\label{brezis} Let $U$ be a reflexive Banach space. If $A:U\to U^*$ is bounded, continuous, coercive, and pseudo-monotone then $A(U)=U^*$.
\end{thm}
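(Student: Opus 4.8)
The plan is to prove surjectivity via the Galerkin method, the classical route to Br\'ezis's theorem on pseudo-monotone operators. Fix $f \in U^*$; I want $u \in U$ with $A(u) = f$. First I would reduce to the case $f = 0$: the operator $A - f$ is again bounded and continuous, it is coercive since $\langle f, u\rangle/\|u\|$ remains bounded while $\langle A(u), u\rangle/\|u\| \to +\infty$, and it is pseudo-monotone because $\langle f, u_n - v\rangle \to \langle f, u - v\rangle$ whenever $u_n \rightharpoonup u$; hence it suffices to find $u$ with $A(u) = 0$.

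Assume for the moment that $U$ is separable and choose a nested sequence $F_1 \subseteq F_2 \subseteq \cdots$ of finite-dimensional subspaces with $\overline{\bigcup_n F_n} = U$. Writing $m_n := \dim F_n$ and fixing a basis $(e_j)_{j=1}^{m_n}$ of $F_n$, the map $\Phi_n : F_n \to \R^{m_n}$, $\Phi_n(u) := (\langle A(u), e_j\rangle)_{j=1}^{m_n}$, is continuous because $A$ is, and if $u = \sum_j c_j e_j$ then the Euclidean pairing of $\Phi_n(u)$ with $(c_j)_j$ equals $\langle A(u), u\rangle$. By coercivity there is $R > 0$, independent of $n$, with $\langle A(u), u\rangle > 0$ whenever $\|u\| = R$; the elementary consequence of Brouwer's fixed point theorem (a continuous field on a closed ball of a finite-dimensional space that points outward on the boundary sphere has a zero inside) then produces $u_n \in F_n$ with $\|u_n\| \le R$ and $\langle A(u_n), v\rangle = 0$ for all $v \in F_n$; in particular $\langle A(u_n), u_n\rangle = 0$. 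Since $(u_n)$ is bounded, reflexivity yields a subsequence with $u_n \rightharpoonup u$ in $U$, and since $A$ is bounded, $(A(u_n))$ is bounded in $U^*$.

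It remains to pass to the limit. To apply pseudo-monotonicity at $u$ I must verify $\limsup_n \langle A(u_n), u_n - u\rangle \le 0$: picking $v_n \in F_n$ with $v_n \to u$ in $U$ (by density), the Galerkin identity gives $\langle A(u_n), u_n - v_n\rangle = 0$, so $\langle A(u_n), u_n - u\rangle = \langle A(u_n), v_n - u\rangle \to 0$ by boundedness of $(A(u_n))$. Pseudo-monotonicity then gives $\liminf_n \langle A(u_n), u_n - w\rangle \ge \langle A(u), u - w\rangle$ for every $w \in U$. Taking $w = u - z$ with $z \in \bigcup_n F_n$, for $n$ so large that $z \in F_n$ one has $\langle A(u_n), u_n - w\rangle = \langle A(u_n), u_n - u\rangle + \langle A(u_n), z\rangle = \langle A(u_n), u_n - u\rangle \to 0$, hence $0 \ge \langle A(u), z\rangle$; replacing $z$ by $-z$ shows that $A(u)$ vanishes on the dense set $\bigcup_n F_n$, so $A(u) = 0 = f$. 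For non-separable $U$ the same scheme works with $(F_n)$ replaced by the directed family of all finite-dimensional subspaces and $u$ taken as a weak cluster point of the resulting net, which exists by the weak compactness of bounded sets in a reflexive space.

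The step I expect to be the main obstacle is this limit passage: arranging that the Galerkin solutions force precisely $\limsup_n \langle A(u_n), u_n - u\rangle \le 0$ — where density of $\bigcup_n F_n$, the $n$-independent bound $R$, and boundedness of $A$ all come into play — and then squeezing $A(u) = f$ out of pseudo-monotonicity by testing against the correct elements. The directed-net bookkeeping needed to remove the separability hypothesis is the only further technicality.
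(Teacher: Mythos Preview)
Your argument is correct and is precisely the classical Galerkin proof of Br\'ezis's surjectivity theorem. Note, however, that the paper does not prove this statement at all: it is recorded there as a known tool and simply referred to \cite[Theorem 2.99]{CLM}. So there is no ``paper's own proof'' to compare against; what you have written is essentially the standard proof one finds in the cited monograph (and in \cite{Zeidler}, also in the paper's bibliography). The reduction to $f=0$, the acute-angle/Brouwer step yielding Galerkin approximants $u_n\in F_n$ with the $n$-independent bound $\|u_n\|<R$ forced a posteriori by $\langle A(u_n),u_n\rangle=0$, the density trick $v_n\to u$ to obtain $\limsup_n\langle A(u_n),u_n-u\rangle\le 0$, and the final testing against $w=u-z$ with $z\in\bigcup_n F_n$ are all handled correctly. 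The directed-net extension to the non-separable case is likewise standard and unproblematic.
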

Henceforth, $\Omega$ will denote a bounded domain of the real Euclidean $N$-space $(\R^N, |\cdot|)$, $N\geq 2$, with smooth boundary $\partial\Omega$, on which we will employ the $(N-1)$-dimensional Hausdorff measure $\sigma$.

If $X(\Omega)$ is a measurable real-valued function space on $\Omega$ and $u,v\in X(\Omega)$ then, by definition, $u<v$ means $u(x)<v(x)$ almost everywhere in $\Omega$. Analogously for $u\leq v$, etc. Put 
\begin{equation*}
X(\Omega)_+:=\left\{u\in X(\Omega): u\geq 0\right\}   
\end{equation*}
as well as
\begin{equation*}
[u,v]:=\{ w\in X(\Omega): u\leq w\leq v\}
\end{equation*}
once $u\leq v$. To shorten notation, set $C_+:=C^1(\overline{\Omega})_+$. An easy computation yields
\begin{equation*}
\cone:=\{ u\in C^1(\overline{\Omega}):u(x)>0\;\;\forall\, x\in \overline{\Omega}\}.
\end{equation*}
Let $p\in [1,+\infty]$. We denote by $p^{\prime}$ the conjugate exponent of $p$, while
\begin{equation*}
\Vert u\Vert _{p}:=\left\{ 
\begin{array}{ll}
\left(\int_{\Omega}|u|^{p}\dx\right)^{1/p} &\text{if }p<+\infty, \\ 
\phantom{} & \\ 
\underset{\Omega}{\esssup}\,|u| &\text{otherwise.}
\end{array}
\right.
\end{equation*}
The symbol $|E|$ stands for the $N$-dimensional Lebesgue measure of the set $E\subseteq\R^{N}$ and
\begin{equation*}
a\vee b:=\max\{a,b\},\;\; a^\pm:=(\pm a)\vee 0,\;\; a\wedge b:=\min\{a,b\}\;\;
\quad\forall\, a,b\in\R.
\end{equation*}
Finally, $g:\Omega\times\R^2\to\R$ is called a Carath\'{e}odory function when:
\begin{itemize}
\item $x\mapsto g(x,s_1,s_2)$ turns out measurable for every $(s_1,s_2)\in\R^2$;
\item $(s_1,s_2)\mapsto g(x,s_1,s_2)$ is continuous for almost all $x\in\Omega$.
\end{itemize}
\section{Auxiliary results}
Let $p\in (1,+\infty)$ and let $\beta>0$. Consider the operator $A_p:W^{1,p}(\Omega)\to W^{1,p}(\Omega)^*$ given by
\begin{equation*}
\langle A_p(u),\psi\rangle:=\int_\Omega |\nabla u|^{p-2}\nabla u \nabla\psi \dx +\beta \int_{\partial\Omega} |u|^{p-2}u\psi\ds\quad\forall\, u,\psi\in W^{1,p}(\Omega).
\end{equation*}
If we define
\begin{equation}\label{equivnorm}
\mathscr{E}_p(u):=\langle A_p(u),u \rangle = \int_\Omega |\nabla u|^p \dx + \beta \int_{\partial \Omega} |u|^p \ds\, , \quad u \in W^{1,p}(\Omega),
\end{equation}
then $\mathscr{E}_p(\cdot)^{1/p}$ is a norm on $W^{1,p}(\Omega)$  equivalent to the usual one $\Vert\cdot\Vert_{1,p}$; cf. \cite[Remark 1.54]{MMP}. Some basic facts concerning $A_p$ are collected in the next result.
\begin{prop}\label{properties}
$({\rm a}_1)$ $A_p$ is bounded, continuous, strictly monotone, and of type ${\rm (S)_+}$.\\
$({\rm a}_2)$ $A_p$ is bijective and  $A_p^{-1}:W^{1,p}(\Omega)^*\to W^{1,p}(\Omega)$ is continuous.
\end{prop}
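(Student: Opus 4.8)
The plan is to prove $(\mathrm{a}_1)$ first and then deduce $(\mathrm{a}_2)$ from it using the abstract theory. For $(\mathrm{a}_1)$, I would treat the two summands of $A_p$ separately: the ``gradient part'' $u\mapsto\left(\psi\mapsto\int_\Omega|\nabla u|^{p-2}\nabla u\cdot\nabla\psi\dx\right)$ and the ``boundary part'' $u\mapsto\left(\psi\mapsto\beta\int_{\partial\Omega}|u|^{p-2}u\psi\ds\right)$. Boundedness and continuity of each are routine: for boundedness, use H\"older's inequality together with the continuity of the trace operator $W^{1,p}(\Omega)\hookrightarrow L^p(\partial\Omega)$; for continuity, pass to a subsequence, extract a.e.-convergent subsequences of $\nabla u_n$ on $\Omega$ and of the traces $u_n$ on $\partial\Omega$ (the latter using compactness of the trace embedding into $L^p(\partial\Omega)$ or just strong $L^p$-convergence of traces), dominate, and invoke a standard Vitali/dominated-convergence argument, concluding by the usual subsequence principle. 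Strict monotonicity follows from the elementary vector inequality $(|a|^{p-2}a-|b|^{p-2}b)\cdot(a-b)>0$ for $a\ne b$ in $\R^N$, applied pointwise to $a=\nabla u$, $b=\nabla v$ inside $\Omega$, and to the scalars $u,v$ on $\partial\Omega$; summing, $\langle A_p(u)-A_p(v),u-v\rangle\ge0$, with equality forcing $\nabla u=\nabla v$ a.e.\ and $u=v$ on $\partial\Omega$, hence $u=v$ by the Poincar\'e-type inequality encoded in \eqref{equivnorm}.

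The genuinely substantive part of $(\mathrm{a}_1)$ is the $(\mathrm{S})_+$ property. Suppose $u_n\rightharpoonup u$ in $W^{1,p}(\Omega)$ with $\limsup_n\langle A_p(u_n),u_n-u\rangle\le0$. By compactness of the trace embedding $W^{1,p}(\Omega)\hookrightarrow L^p(\partial\Omega)$ one has $u_n\to u$ in $L^p(\partial\Omega)$, so the boundary term $\beta\int_{\partial\Omega}|u_n|^{p-2}u_n(u_n-u)\ds\to0$; therefore $\limsup_n\int_\Omega|\nabla u_n|^{p-2}\nabla u_n\cdot(\nabla u_n-\nabla u)\dx\le0$. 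Since $\nabla u_n\rightharpoonup\nabla u$ in $L^p(\Omega;\R^N)$ one also has $\int_\Omega|\nabla u|^{p-2}\nabla u\cdot(\nabla u_n-\nabla u)\dx\to0$, whence $\limsup_n\int_\Omega\left(|\nabla u_n|^{p-2}\nabla u_n-|\nabla u|^{p-2}\nabla u\right)\cdot(\nabla u_n-\nabla u)\dx\le0$; the integrand being nonnegative, this forces convergence to $0$ in $L^1(\Omega)$. A standard argument (using, e.g., the Simon inequalities distinguishing $1<p<2$ and $p\ge2$) then upgrades this to $\nabla u_n\to\nabla u$ in $L^p(\Omega;\R^N)$, and combined with $u_n\to u$ in $L^p(\partial\Omega)$ (or simply $u_n\to u$ in $L^p(\Omega)$ from the compact embedding) one gets $u_n\to u$ in the equivalent norm $\mathscr{E}_p(\cdot)^{1/p}$, hence in $\|\cdot\|_{1,p}$. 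This is the step I expect to be the main obstacle, mostly in handling the case $1<p<2$ cleanly.

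For $(\mathrm{a}_2)$: $A_p$ is coercive because $\langle A_p(u),u\rangle/\|u\|_{1,p}=\mathscr{E}_p(u)/\|u\|_{1,p}\ge c\|u\|_{1,p}^{p-1}\to+\infty$ as $\|u\|_{1,p}\to+\infty$, using the norm equivalence and $p>1$. Since $A_p$ is bounded, continuous, and of type $(\mathrm{S})_+$, it is pseudo-monotone (as recalled just before Theorem \ref{thmbrezis}), and $W^{1,p}(\Omega)$ is reflexive; Theorem \ref{thmbrezis} then yields surjectivity, $A_p(W^{1,p}(\Omega))=W^{1,p}(\Omega)^*$. Injectivity is immediate from strict monotonicity. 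Thus $A_p^{-1}$ is well defined. Finally, continuity of $A_p^{-1}$ follows from the $(\mathrm{S})_+$ property by the usual scheme: if $f_n\to f$ in $W^{1,p}(\Omega)^*$ and $u_n:=A_p^{-1}(f_n)$, then $\langle f_n,u_n\rangle$ is bounded, so coercivity gives boundedness of $(u_n)$; by reflexivity a subsequence satisfies $u_{n_k}\rightharpoonup u$, and $\langle A_p(u_{n_k}),u_{n_k}-u\rangle=\langle f_{n_k},u_{n_k}-u\rangle\to0$, so $(\mathrm{S})_+$ gives $u_{n_k}\to u$; continuity of $A_p$ then forces $A_p(u)=f$, i.e.\ $u=A_p^{-1}(f)$, and since every subsequence has a further subsequence converging to the same limit, the whole sequence converges.
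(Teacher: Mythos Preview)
Your proof is correct and follows the same overall architecture as the paper's: split $A_p=B_p+C_p$ into gradient and boundary parts, obtain $(\mathrm{a}_2)$ from coercivity via Theorem~\ref{thmbrezis} and strict monotonicity, and prove continuity of $A_p^{-1}$ by the bounded--weak-limit--$(\mathrm{S})_+$ scheme. The main difference is in how the $(\mathrm{S})_+$ property is established. The paper argues abstractly: it cites that $B_p$ is of type $(\mathrm{S})_+$ and observes that $C_p$ is monotone, then invokes a general lemma to the effect that ``$(\mathrm{S})_+$ plus monotone is $(\mathrm{S})_+$''. You instead work by hand: you use \emph{compactness} of the trace embedding to kill the boundary contribution outright, reduce to the gradient term, and then upgrade via the Simon/Lindqvist inequalities. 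Your route is more self-contained and makes explicit why the boundary term is harmless (it is in fact completely continuous, not merely monotone), at the cost of rehearsing the $p<2$ case; the paper's route is shorter but leans on several external references. A second, minor difference: in the continuity of $A_p^{-1}$, once $u_{n_k}\to \hat u$ strongly you simply invoke the already-proved continuity of $A_p$ to get $A_p(\hat u)=f$, whereas the paper re-derives this via Nemytskii-type convergence of $|\nabla u_n|^{p-2}\nabla u_n$ and $|u_n|^{p-2}u_n$ before appealing to strict monotonicity; your shortcut is cleaner.
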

\begin{proof} Boundedness is trivial, while continuity comes from the same property of the trace operator, for which we refer to \cite[Theorem 1.39]{MMP}. Since strict monotonicity is a straightforward consequence of the vector inequalities in \cite[Chapter 12]{Lind}, it remains to verify that $A_p$ fulfills condition ${\rm (S)_+}$. We evidently have $A_p=B_p+C_p$, where
\begin{equation}
\label{splitting}
\begin{split}
\langle B_p(u),\psi \rangle := \int_\Omega |\nabla u|^{p-2} \nabla u \nabla \psi \dx, \quad \langle C_p(u),\psi \rangle := \int_{\partial \Omega} |u|^{p-2} u \psi \ds.
\end{split}
\end{equation}
By \cite[Proposition 12.14]{MMP} the operator $B_p$ is of type ${\rm (S)_+}$, while $C_p$ turns out monotone. Thus, the assertion follows from \cite[Proposition 2.138(c)]{GP}. This shows $({\rm a}_1)$. \\ 
After noting that
\begin{equation*}
\langle A_p(u),u\rangle=\mathcal{E}_p(u)\geq c\Vert u\Vert_{1,p}^p\quad\forall\, u\in W^{1,p}(\Omega),
\end{equation*}
Theorem \ref{thmbrezis} and $({\rm a}_1)$ directly yield the bijectivity of $A_p$; see also \cite[Theorem 26.A]{Zeidler}. Now, pick a sequence $\{f_n\}\subseteq W^{1,p}(\Omega)^*$ such that $f_n\to f$ in $W^{1,p}(\Omega)^*$ and write  $u_n:=A_p^{-1}(f_n)$, $u:=A_p^{-1}(f)$. One evidently has:
\begin{equation}\label{weakformulations}
\langle A_p(u_n),\psi\rangle=\langle f_n,\psi\rangle\quad\forall\, n\in\N,\; \psi\in W^{1,p}(\Omega);
\end{equation}
\begin{equation}\label{weakformulation}
\langle A_p(u),\psi\rangle=\langle f,\psi\rangle\quad \forall\,\psi\in W^{1,p}(\Omega).
\end{equation}
Through \eqref{weakformulations}, besides \eqref{equivnorm}, we obtain
\begin{equation*}
\mathscr{E}_p(u_n)\leq\|f_n\|_{W^{1,p}(\Omega)^*}\|u_n\|_{1,p}
\leq c\,\mathscr{E}_p(u_n)^{1/p},\;\;n\in\N,
\end{equation*}
because $\{f_n\}$ turns out bounded. Therefore, $\{u_n\}\subseteq W^{1,p}(\Omega)$ enjoys the same property and, taking a sub-sequence if necessary, $u_n\rightharpoonup\hat u$ in $W^{1,p}(\Omega)$. Accordingly, by \eqref{weakformulations} again,
\begin{equation*}
\lim_{n\to\infty}\langle A_p(u_n),u_n-\hat u\rangle=\lim_{n\to\infty}\langle f_n,u_n-\hat u\rangle = 0.
\end{equation*}
Since $A_p$ is of type ${\rm (S)_+}$, this entails $u_n\to \hat u$ in $W^{1,p}(\Omega)$ and, a fortiori,
\begin{equation*}
\lim_{n\to+\infty}\Vert\nabla u_n-\nabla\hat u\Vert_p=0,\;\;
\lim_{n\to+\infty}\Vert u_n-\hat u\Vert_{L^p(\partial\Omega)}=0   
\end{equation*}
(recall that the trace operator is continuous).
Thanks to \cite[Theorem 2.76]{MMP} we thus arrive at
\begin{equation}\label{nemytskii}
\begin{alignedat}{2}
|\nabla u_n|^{p-2}\nabla u_n &\to |\nabla\hat u|^{p-2}\nabla\hat u\quad &&\mbox{in $L^{p'}(\Omega)$,}\\
|u_n|^{p-2}u_n &\to |\hat u|^{p-2}\hat u\quad &&\mbox{in $L^{p'}(\partial\Omega)$,}
\end{alignedat}
\end{equation}
whence
\begin{equation*}
\langle B_p(u_n),\psi\rangle\to\langle B_p(\hat u),\psi\rangle\;\;\mbox{and}\;\; \langle C_p(u_n),\psi\rangle\to\langle C_p(\hat u),\psi\rangle\;\;\forall\,\psi\in W^{1,p}(\Omega).
\end{equation*}
On the other hand, from $f_n\to f$ in $W^{1,p}(\Omega)^*$ it follows $\langle f_n,\psi\rangle\to\langle f,\psi\rangle$ whatever $\psi\in W^{1,p}(\Omega)$. Thus, \eqref{weakformulations}--\eqref{weakformulation} easily lead to
\begin{equation*}
\langle A_p(\hat u),\psi\rangle=\langle f,\psi\rangle=\langle A_p(u),\psi\rangle,\;\; \psi \in W^{1,p}(\Omega).
\end{equation*}
By the strict monotonicity of $A_p$, we finally have $\hat u=u$. Summing up, $u_n\to u$, namely $A_p^{-1}(f_n)\to A_p^{-1}(f)$, in $W^{1,p}(\Omega)$, as desired.
\end{proof}
We indicate with $(\lambda_p,\phi_p)$, omitting the dependence on $\beta$ when no confusion can arise, the pair of first eigenvalue-eigenfunction of the problem
\begin{equation}\label{eigen}
\left\{
\begin{alignedat}{2}
-\Delta_p\phi &=\lambda |\phi|^{p-2}\phi\quad &&\mbox{in}\;\;\Omega,\\
|\nabla\phi|^{p-2}\nabla\phi\cdot\nabla\nu &=-\beta|\phi|^{p-2}\phi\quad &&\mbox{on}\;\; \partial\Omega.
\end{alignedat}
\right.
\end{equation}
Recall that $\phi_p\in\cone$, without loss of generality one can assume $\|\phi_p\|_p=1$, and 
\begin{equation}\label{varchar}
\lambda_p=\inf\left\{ \mathscr{E}_p(u):\, \|u\|_p = 1 \right\},
\end{equation}
the infimum being attained at $\phi_p$. Denote by $\hat{\lambda}_p$ the second eigenvalue of \eqref{eigen}.

The next two results are chiefly patterned after Proposition 9.64 and Theorem 9.67 in \cite{MMP}, respectively.
\begin{prop}\label{nonex}
Let $h\in L^\infty(\Omega)_+\setminus\{0\}$. Then the problem
\begin{equation}\label{nonexprob}
\tag{${\rm P}_{\lambda_p}$}
\left\{
\begin{alignedat}{2}
-\Delta_p u &= \lambda_p |u|^{p-2}u + h(x) \quad &&\mbox{in} \;\; \Omega, \\
|\nabla u|^{p-2} \nabla u\cdot\nabla\nu &= -\beta |u|^{p-2}u  \quad &&\mbox{on} \;\; \partial\Omega
\end{alignedat}
\right.
\end{equation}
has no weak solution.
\end{prop}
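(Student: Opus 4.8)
The plan is to argue by contradiction, testing the weak formulation against the first eigenfunction $\phi_p$ and against $u$ itself, and extracting a contradiction from the simplicity of $\lambda_p$.

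First I would suppose that $u\in W^{1,p}(\Omega)$ is a weak solution of \eqref{nonexprob}, so that
\begin{equation*}
\int_\Omega |\nabla u|^{p-2}\nabla u\nabla\psi\dx + \beta\int_{\partial\Omega}|u|^{p-2}u\psi\ds = \lambda_p\int_\Omega |u|^{p-2}u\psi\dx + \int_\Omega h\psi\dx
\end{equation*}
for every $\psi\in W^{1,p}(\Omega)$. Choosing $\psi=\phi_p$ and using that $\phi_p$ is itself a weak solution of \eqref{eigen} (tested against $u$), the two boundary terms and the two ``$|\nabla\cdot|^{p-2}\nabla\cdot$'' terms cancel in the standard way, leaving
\begin{equation*}
\lambda_p\int_\Omega |u|^{p-2}u\,\phi_p\dx + \int_\Omega h\,\phi_p\dx = \lambda_p\int_\Omega |\phi_p|^{p-2}\phi_p\, u\dx.
\end{equation*}
Since $\phi_p\in\cone$ and $h\in L^\infty(\Omega)_+\setminus\{0\}$, the term $\int_\Omega h\,\phi_p\dx$ is strictly positive; hence $\int_\Omega(|u|^{p-2}u\,\phi_p-|\phi_p|^{p-2}\phi_p\,u)\dx<0$. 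This already forces $u$ to change sign or at least not be a multiple of $\phi_p$, but I need more to close the argument.

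Next I would use the variational characterization \eqref{varchar}. Testing the weak formulation with $\psi=u$ gives $\mathscr{E}_p(u)=\lambda_p\|u\|_p^p+\int_\Omega h\,u\dx$. On the other hand, from \eqref{varchar} one has $\mathscr{E}_p(u)\ge\lambda_p\|u\|_p^p$, so $\int_\Omega h\,u\dx\ge 0$. To get a genuine contradiction I would instead run the standard Picone-type / convexity argument: consider the functional $J(v):=\tfrac1p\mathscr{E}_p(v)-\tfrac{\lambda_p}{p}\|v\|_p^p-\int_\Omega h\,v\dx$ on the set $\{v\in W^{1,p}(\Omega):\|v\|_p=t\}$ for suitable $t$, or equivalently exploit that $\mathscr{E}_p(v)-\lambda_p\|v\|_p^p\ge 0$ with equality only on the line $\R\phi_p$ (this is where simplicity of $\lambda_p$ enters). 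Then $\mathscr{E}_p(u)-\lambda_p\|u\|_p^p=\int_\Omega h\,u\dx$, and by writing $u$ near its ``$\phi_p$-component'' and using that $h\ge 0$, $h\not\equiv 0$, one shows the right-hand side must be $\le 0$ while non-negativity of the left-hand side forces $u\in\R\phi_p$; but then $\int_\Omega h\,u\dx=\mathscr{E}_p(u)-\lambda_p\|u\|_p^p=0$, contradicting $u=c\phi_p$ with $c\ne 0$ and $h\not\equiv 0$, $\phi_p>0$. The case $u\equiv 0$ is excluded directly since then $0=\int_\Omega h\cdot 0\dx$ is consistent but the equation reads $0=h$, impossible.

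The main obstacle I expect is making the ``equality only on $\R\phi_p$'' step rigorous for the $p$-Laplacian, i.e. upgrading the inequality $\mathscr{E}_p(v)\ge\lambda_p\|v\|_p^p$ to the statement that equality forces $v$ to be a scalar multiple of $\phi_p$; this is exactly the simplicity/isolation of the first eigenvalue for the Robin $p$-Laplacian, which I would quote from the eigenvalue theory referenced via \cite{MMP} (it underlies the very definition of $(\lambda_p,\phi_p)$ and $\hat\lambda_p$). Once that is granted, the sign of $\int_\Omega h\,\phi_p\dx>0$ does all the remaining work, and the argument is the natural Robin analogue of Proposition~9.64 in \cite{MMP}. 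An alternative, perhaps cleaner, route avoiding delicate equality cases is a direct nonlinear Picone inequality: pairing $\phi_p$ and $u$ through the Picone identity yields $\lambda_p\|\phi_p\|_p^p\le\mathscr{E}_p(\phi_p)$ with a remainder that, combined with the extra $\int h\,\phi_p\dx>0$ term, is immediately contradictory; I would present whichever of the two is shortest given the tools already in the paper.
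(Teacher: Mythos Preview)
Your first maneuver---testing the equation for $u$ with $\phi_p$, testing the eigenvalue equation with $u$, and then saying the gradient and boundary terms ``cancel in the standard way''---only works when $p=2$. For $p\neq 2$ the operator $A_p$ is genuinely nonlinear, so $\langle A_p(u),\phi_p\rangle$ and $\langle A_p(\phi_p),u\rangle$ have no reason to coincide; the displayed identity you derive from that cancellation is simply false in general. This invalidates the whole first paragraph.

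Your second route also has a real gap. Testing with $u$ gives $\mathscr{E}_p(u)-\lambda_p\|u\|_p^p=\int_\Omega h\,u\dx\ge 0$, but you then assert one can show the right-hand side is $\le 0$ ``by writing $u$ near its $\phi_p$-component.'' There is no $p$-Laplacian analogue of orthogonal decomposition that makes this work; nothing you have written forces $\int_\Omega h\,u\dx\le 0$, so you never reach the equality case and never conclude $u\in\R\phi_p$.

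The Picone route you mention at the end is in fact what the paper does, but it is not a drop-in replacement: Picone's identity requires the denominator to be strictly positive, so one must first prove $u\in\cone$. The paper obtains this by testing with $-u^-$, which yields $\mathscr{E}_p(u^-)\le\lambda_p\|u^-\|_p^p$; simplicity of $\lambda_p$ then forces $u^-=k\phi_p$, and $k>0$ is ruled out because it would make the inequality strict. Hence $u\ge 0$, and regularity plus the strong maximum principle give $u\in\cone$. Only then does Picone's identity applied to $\phi_p$ and $u+\epsilon$ (letting $\epsilon\searrow 0$) produce the contradiction $0\le -\int_\Omega h\phi_p^p/u^{p-1}\dx<0$. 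Your proposal is missing this positivity step entirely.
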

\begin{proof}
Suppose there exists $u\in W^{1,p}(\Omega)$ solving \eqref{nonexprob}. We first show that $u\in\cone$. Testing with $-u^-$ entails
\begin{equation}\label{negenergy}
\mathscr{E}_p(u^-) = \lambda_p \|u^-\|_p^p - \int_\Omega hu^- \dx \leq \lambda_p \|u^-\|_p^p.
\end{equation}
Via \eqref{varchar} one has $u^-=k\phi_p$ for some $k\geq 0$. If $k>0$ then $u^-\in\cone$. Therefore, $\int_\Omega hu^- \dx>0$ and \eqref{negenergy} becomes
\begin{equation*}
\mathscr{E}_p(u^-) < \lambda_p \|u^-\|_p^p,
\end{equation*}
which contradicts \eqref{varchar}. So $k=0$, namely $u \geq 0$ in $\Omega$. The conditions on $h$ force $u \not\equiv 0$. Thus, standard results from nonlinear regularity theory \cite{L} and the strong maximum principle \cite{PS} yield $u \in \cone$.

Applying Picone's identity \cite[Theorem 1.1]{AH} to $\phi_p$ and $u+\epsilon$, with $\epsilon>0$, we get
\begin{equation*}
\begin{split}
0 &\leq \|\nabla \phi_p\|_p^p - \int_\Omega |\nabla u|^{p-2}\nabla u \nabla \left( \frac{\phi_p^p}{(u+\epsilon)^{p-1}} \right) \dx \\
&= \|\nabla \phi_p\|_p^p + \beta \int_{\partial\Omega} \left(\frac{u}{u+\epsilon} \right)^{p-1} \phi_p^p \ds -\lambda_p \int_\Omega \left(\frac{u}{u+\epsilon} \right)^{p-1} \phi_p^p \dx \\
&\quad - \int_\Omega \frac{h\phi_p^p}{(u+\epsilon)^{p-1}} \dx.
\end{split}
\end{equation*}
As $\epsilon\searrow 0$, Beppo Levi's theorem and the properties of $\phi_p$ lead to
\begin{equation*}
0\leq\mathscr{E}_p(\phi_p)-\lambda_p-\int_\Omega\frac{h\phi_p^p}{u^{p-1}} \dx =-\int_\Omega \frac{h\phi_p^p}{u^{p-1}} \dx<0.
\end{equation*}
This contradiction completes the proof.
\end{proof}

\begin{prop}\label{antimax}
Assume $h\in L^\infty(\Omega)_+\setminus\{0\}$. Then there exists $\delta>0$ such that, for every $\mu\in (\lambda_p,\lambda_p+ \delta)$, all solutions of the problem
\begin{equation}\label{antimaxprob}
\tag{${\rm P}_\mu$}
\left\{
\begin{alignedat}{2}
-\Delta_p u &= \mu |u|^{p-2}u + h(x) \quad &&\mbox{in} \;\; \Omega, \\
|\nabla u|^{p-2}\nabla u\cdot\nabla\nu &= -\beta |u|^{p-2}u \quad &&\mbox{on}\;\; \partial \Omega
\end{alignedat}
\right.
\end{equation}
belong to $-\cone$.
\end{prop}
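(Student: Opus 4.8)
The plan is to argue by contradiction, in the spirit of \cite[Theorem 9.67]{MMP}. Suppose the assertion fails. Then for each $n\in\N$ there are $\mu_n\in(\lambda_p,\lambda_p+1/n)$ and a weak solution $u_n$ of $(\mathrm{P}_{\mu_n})$ with $u_n\notin-\cone$; in particular $\mu_n\searrow\lambda_p$. I would then split the discussion according to whether $\{u_n\}$ is bounded in $W^{1,p}(\Omega)$.

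Suppose first that $\{u_n\}$ is bounded. Up to a subsequence, $u_n\rightharpoonup u$ in $W^{1,p}(\Omega)$. Testing the weak form of $(\mathrm{P}_{\mu_n})$ with $\psi=u_n-u$ and using the compact embedding $W^{1,p}(\Omega)\hookrightarrow L^p(\Omega)$ shows that $\limsup_n\langle A_p(u_n),u_n-u\rangle\le0$; since $A_p$ is of type $(\mathrm{S})_+$ (Proposition \ref{properties}), this forces $u_n\to u$ in $W^{1,p}(\Omega)$. Passing to the limit in the weak formulation, and using $\mu_n\to\lambda_p$ together with the continuity of $A_p$, shows that $u$ is a weak solution of $(\mathrm{P}_{\lambda_p})$, which is impossible by Proposition \ref{nonex}.

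Suppose instead that $\|u_n\|_{1,p}\to+\infty$ along a subsequence. Set $v_n:=u_n/\|u_n\|_{1,p}$; dividing the equation by $\|u_n\|_{1,p}^{p-1}$, each $v_n$ weakly solves $-\Delta_p v_n=\mu_n|v_n|^{p-2}v_n+\|u_n\|_{1,p}^{1-p}h$ with the same Robin condition. Repeating the $(\mathrm{S})_+$ argument (the forcing term now tends to $0$) gives $v_n\to v$ in $W^{1,p}(\Omega)$ with $\|v\|_{1,p}=1$, where $v$ weakly solves \eqref{eigen} with $\lambda=\lambda_p$. Testing with $v$ yields $\mathscr{E}_p(v)=\lambda_p\|v\|_p^p$, so $v/\|v\|_p$ attains the infimum in \eqref{varchar}; since $\mathscr{E}_p(|v|)=\mathscr{E}_p(v)$, $|v|$ is also a minimizer, whence nonlinear regularity \cite{L} and the strong maximum principle \cite{PS} give $|v|\in\cone$, so that $v\in\cone$ or $v\in-\cone$. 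To upgrade the convergence I would first derive a uniform $L^\infty(\Omega)$ bound on $\{v_n\}$ (from its boundedness in $W^{1,p}(\Omega)$ and the subcritical growth of the reaction), which makes the right-hand sides bounded in $L^\infty(\Omega)$; then global regularity up to the boundary \cite{L} provides a uniform bound for $\{v_n\}$ in $C^{1,\alpha}(\overline\Omega)$ for some $\alpha\in(0,1)$, and the compactness of $C^{1,\alpha}(\overline\Omega)\hookrightarrow C^1(\overline\Omega)$ upgrades the convergence to $v_n\to v$ in $C^1(\overline\Omega)$.

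Finally I would use that $\cone$ and $-\cone$ are open in $C^1(\overline\Omega)$. If $v\in-\cone$, then $v_n\in-\cone$ for all large $n$, so $u_n=\|u_n\|_{1,p}v_n\in-\cone$, contradicting the choice of $u_n$. If $v\in\cone$, then $u_n\in\cone$ for all large $n$; applying Picone's identity \cite[Theorem 1.1]{AH} to $\phi_p$ and $u_n+\epsilon$ exactly as in the proof of Proposition \ref{nonex} (the boundary term combines with $\|\nabla\phi_p\|_p^p$ to give $\mathscr{E}_p(\phi_p)$, while the forcing adds $-\int_\Omega h\phi_p^p(u_n+\epsilon)^{1-p}\dx$) and letting $\epsilon\searrow0$, one obtains
\[
0\le\mathscr{E}_p(\phi_p)-\mu_n\|\phi_p\|_p^p-\int_\Omega\frac{h\phi_p^p}{u_n^{p-1}}\dx=\lambda_p-\mu_n-\int_\Omega\frac{h\phi_p^p}{u_n^{p-1}}\dx<0,
\]
a contradiction because $\mu_n>\lambda_p$ and $h\ge0$. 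As both alternatives are ruled out, the proposition follows. The main obstacle is the passage from $W^{1,p}$ to $C^1$ convergence of the normalized sequence $\{v_n\}$ --- this requires uniform a priori estimates up to the boundary for the Robin $p$-Laplacian and is precisely what turns the sign information on the limiting profile into a contradiction --- together with the observation that the ``wrong'' sign $v\in\cone$ is excluded simply by recycling the nonexistence mechanism of Proposition \ref{nonex}.
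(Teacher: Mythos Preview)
Your argument is correct and follows the same contradiction strategy as the paper: produce sequences $\mu_n\searrow\lambda_p$ and $u_n\notin-\cone$, rule out the bounded case via Proposition~\ref{nonex}, normalize in the unbounded case to reach a first eigenfunction in the limit, and exclude each sign alternative. The technical route differs in two places worth noting. First, the paper normalizes by $M_n:=\|u_n\|_\infty$ rather than $\|u_n\|_{1,p}$; since $\|w_n\|_\infty=1$ automatically, Lieberman's estimates and Ascoli--Arzel\`a give $C^1$-convergence directly, bypassing both the $(\mathrm{S})_+$ step and the Moser-type bootstrap you need to pass from $W^{1,p}$-boundedness of $\{v_n\}$ to an $L^\infty$ bound. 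Second, in the case of a positive limit the paper does not redo the Picone computation but simply rewrites the equation for $w_n$ as
\[
-\Delta_p w_n=\lambda_p\, w_n^{p-1}+\Bigl[(\mu_n-\lambda_p)\,w_n^{p-1}+M_n^{1-p}h\Bigr]
\]
and observes that the bracket lies in $L^\infty(\Omega)_+\setminus\{0\}$, so Proposition~\ref{nonex} applies to $w_n$ as a black box. Your direct Picone argument on $u_n$ is equivalent (it is precisely the content of that proposition with $\lambda_p$ replaced by $\mu_n>\lambda_p$, which only strengthens the final inequality), but the paper's packaging is a bit tidier; conversely, your $(\mathrm{S})_+$-based treatment of the bounded case avoids invoking $C^{1,\alpha}$ regularity there.
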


\begin{proof}
If the conclusion were false we might construct two sequences $\{\mu_n\}\subseteq\R^+$ and $\{u_n\} \subseteq W^{1,p}(\Omega)$ fulfilling $\mu_n\searrow\lambda_p$, each $u_n$ solves $({\rm P}_{\mu_n})$, as well as
\begin{equation}\label{abs1}
u_n\not\in-\cone\quad\forall\, n\in\N.    
\end{equation}
Let us first verify that $\{u_n\}$ is unbounded in $L^\infty(\Omega)$. Supposing
$\sup_{n\in\N}\Vert u_n\Vert_\infty<+\infty$, standard results from regularity theory \cite{L} yield
$\sup_{n\in\N}\Vert u_n\Vert_{C^{1,\alpha}(\overline{\Omega})}<+\infty$.
Hence, thanks to Ascoli-Arzelà's theorem, $u_n\to u$ in $C^1(\overline{\Omega})$, where a sub-sequence is considered when necessary. As $n\to+\infty$, this entails that $u$ solves \eqref{nonexprob}, against Proposition \ref{nonex}. 

Define $M_n:=\|u_n\|_\infty$ and observe that, up to sub-sequences, $M_n\to +\infty$. Dividing both sides of $({\rm P}_{\mu_n})$ by $M_n^{p-1}$ we then obtain
\begin{equation}\label{homantimaxprob}
\left\{
\begin{alignedat}{2}
-\Delta_p w_n &= \mu_n |w_n|^{p-2}w_n + \frac{h(x)}{M_n^{p-1}} \quad &&\mbox{in} \;\; \Omega, \\
|\nabla w_n|^{p-2} \nabla w_n \cdot \nabla \nu &= -\beta |w_n|^{p-2}w_n  \quad &&\mbox{on} \;\; \partial \Omega,
\end{alignedat}
\right.
\end{equation}
where $w_n:=\frac{u_n}{M_n}$. Since $\Vert w_n\Vert_\infty=1$ for all $n\in\N$, like before, nonlinear regularity and Ascoli-Arzelà's theorem produce $w_n \to w$ in $C^1(\overline{\Omega})$. From \eqref{homantimaxprob} it follows that $w$ solves \eqref{eigen}, because $\mu_n\to\lambda_p$ while $M_n\to+\infty$.  Moreover, $\|w\|_\infty = 1$ implies $w\not\equiv 0$, whence either $w\in \cone$ or $w\in -\cone$. The condition $w\in \cone$ forces $w_n \in \cone$ for any $n$ big enough. Due to \eqref{homantimaxprob} again one evidently has
\begin{equation*}
\left\{
\begin{alignedat}{2}
&-\Delta_p w_n = \lambda_p |w_n|^{p-2}w_n + (\mu_n-\lambda_p) w_n^{p-1} + \frac{h(x)}{M_n^{p-1}} \quad &&\mbox{in} \;\; \Omega, \\
&|\nabla w_n|^{p-2} \nabla w_n \cdot \nabla \nu = -\beta |w_n|^{p-2}w_n  \quad &&\mbox{on} \;\; \partial \Omega,
\end{alignedat}
\right.
\end{equation*}
but this contradicts Proposition \ref{nonex}. So, assume $w\in-\cone$. Then $w_n \in-\cone$ and, a fortiori, $u_n\in-\cone$ for all sufficiently large $n$, which, bearing in mind \eqref{abs1}, is absurd.
\end{proof}
The section ends by proving a sub-super-solution theorem. To shorten notation, write
\begin{equation*}
X^{p_1,p_2}(\Omega):=W^{1,p_1}(\Omega)\times W^{1,p_2}(\Omega).
\end{equation*}
This space is equipped with the norm
\begin{equation*}
(u_1,u_2)\in X^{p_1,p_2}(\Omega)\mapsto \mathscr{E}_{p_1}(u_1)^{1/{p_1}} +\mathscr{E}_{p_2}(u_2)^{1/{p_2}},    
\end{equation*}
equivalent to the usual one $\Vert u_1\Vert_{1,p_1}+\Vert u_2\Vert_{1,p_2}$.

We say that $(\underline{u}_1,\underline{u}_2),(\overline{u}_1,\overline{u}_2)\in 
X^{p_1,p_2}(\Omega)$ form a sub-super-solution pair for \eqref{prob} provided 
$(\underline{u}_1,\underline{u}_2)\leq(\overline{u}_1,\overline{u}_2)$ and
\begin{equation}\label{subsuper}
\begin{split}
\langle A_{p_1} \underline{u}_1,\psi_1 \rangle \leq \int_\Omega f_1(x,\underline{u}_1,v_2)\psi_1 \dx,\;\;
\langle A_{p_2} \underline{u}_2,\psi_2 \rangle \leq \int_\Omega f_2(x,v_1,\underline{u}_2)\psi_2 \dx, \\
\langle A_{p_1} \overline{u}_1,\psi_1 \rangle \geq \int_\Omega f_1(x,\overline{u}_1,v_2)\psi_1 \dx,\;\;
\langle A_{p_2} \overline{u}_2,\psi_2 \rangle \geq \int_\Omega f_2(x,v_1,\overline{u}_2)\psi_2 \dx \\
\end{split}
\end{equation}
for all $(\psi_1,\psi_2)\in X^{p_1,p_2}(\Omega)_+$, $(v_1,v_2)\in [\underline{u}_1,\overline{u}_1]\times [\underline{u}_2,\overline{u}_2]$. The set
\begin{equation*}
\mathcal{C}:=[\underline{u}_1,\overline{u}_1]\times [\underline{u}_2,\overline{u}_2]
\end{equation*}
is often called \textit{trapping region}.
\begin{thm}
\label{subsuperthm}
Let $(\underline{u}_1,\underline{u}_2)$, $(\overline{u}_1,\overline{u}_2)$ be a sub-super-solution pair. Suppose there exists $\mu_i>0$ such that
\begin{equation}\label{boundfi}
|f_i(x,s_1,s_2)|\leq\mu_i\;\;\forall\, x\in\Omega,\; (s_1,s_2)\in  [\underline{u}_1(x),\overline{u}_1(x)]\times [\underline{u}_2(x),\overline{u}_2(x)], \end{equation}
$i=1,2$. Then problem \eqref{prob} admits a solution $(u_1,u_2)\in [\underline{u}_1, \overline{u}_1]\times [\underline{u}_2,\overline{u}_2]$.
\end{thm}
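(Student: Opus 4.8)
The plan is to replace the reaction by a bounded truncation that is inactive on the trapping region $\mathcal{C}$, to solve the truncated system through the surjectivity statement of Theorem \ref{thmbrezis}, and finally to prove --- by a comparison argument resting on the strict monotonicity in Proposition \ref{properties}$({\rm a}_1)$ --- that the solution so obtained lies in $\mathcal{C}$, where it automatically solves \eqref{prob}.

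First, set for $i=1,2$
\[
\tau_i(x,s):=\bigl(s\vee\underline{u}_i(x)\bigr)\wedge\overline{u}_i(x),\qquad \hat f_i(x,s_1,s_2):=f_i\bigl(x,\tau_1(x,s_1),\tau_2(x,s_2)\bigr),
\]
which are Carath\'eodory functions with $|\hat f_i|\le\mu_i$ on $\Omega\times\R^2$, by \eqref{boundfi}. Let $N_i:X^{p_1,p_2}(\Omega)\to W^{1,p_i}(\Omega)^*$ be given by $\langle N_i(u_1,u_2),\psi\rangle:=\int_\Omega\hat f_i(x,u_1,u_2)\psi\dx$; exploiting $|\hat f_i|\le\mu_i$, the compactness of $W^{1,p_i}(\Omega)\hookrightarrow L^{p_i}(\Omega)$, and Lebesgue's dominated convergence theorem, one checks that each $N_i$ is completely continuous and that $N_i\bigl(X^{p_1,p_2}(\Omega)\bigr)$ is bounded in $W^{1,p_i}(\Omega)^*$. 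Consider now
\[
\mathcal{A}(u_1,u_2):=\bigl(A_{p_1}(u_1)-N_1(u_1,u_2),\,A_{p_2}(u_2)-N_2(u_1,u_2)\bigr)\in X^{p_1,p_2}(\Omega)^*,
\]
which is bounded and continuous. Its unperturbed part $(u_1,u_2)\mapsto(A_{p_1}(u_1),A_{p_2}(u_2))$ is of type $(\mathrm{S})_+$ on $X^{p_1,p_2}(\Omega)$ --- a consequence of Proposition \ref{properties}$({\rm a}_1)$ and of the monotonicity of each $A_{p_i}$, which lets the $\limsup$-condition on the sum split into one condition per component --- so that adding the completely continuous perturbation $-(N_1,N_2)$ produces a pseudo-monotone operator. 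Moreover $\mathcal{A}$ is coercive, since $\langle\mathcal{A}(u_1,u_2),(u_1,u_2)\rangle=\mathscr{E}_{p_1}(u_1)+\mathscr{E}_{p_2}(u_2)-\sum_{i=1,2}\int_\Omega\hat f_i(x,u_1,u_2)u_i\dx$ and the equivalence of $\mathscr{E}_{p_i}(\cdot)^{1/p_i}$ with $\|\cdot\|_{1,p_i}$, the bound $|\hat f_i|\le\mu_i$, and $p_i>1$ force the quotient by $\|(u_1,u_2)\|$ to diverge. Theorem \ref{thmbrezis} then yields $(u_1,u_2)\in X^{p_1,p_2}(\Omega)$ with $\mathcal{A}(u_1,u_2)=0$, i.e.\ $\langle A_{p_i}(u_i),\psi\rangle=\int_\Omega\hat f_i(x,u_1,u_2)\psi\dx$ for all $\psi\in W^{1,p_i}(\Omega)$ and $i=1,2$.

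It remains to show $(u_1,u_2)\in\mathcal{C}$; as then $\tau_i(\cdot,u_i)=u_i$ and $\hat f_i(\cdot,u_1,u_2)=f_i(\cdot,u_1,u_2)$, this at once makes $(u_1,u_2)$ a solution of \eqref{prob} inside $[\underline{u}_1,\overline{u}_1]\times[\underline{u}_2,\overline{u}_2]$. I verify $u_1\le\overline{u}_1$, the remaining three inequalities being entirely symmetric. Pick the admissible competitor $v_2:=\tau_2(\cdot,u_2)\in[\underline{u}_2,\overline{u}_2]$ in the super-solution inequality of \eqref{subsuper} relative to $\overline{u}_1$, and test both it and the equation for $u_1$ with $\psi:=(u_1-\overline{u}_1)^+\in W^{1,p_1}(\Omega)_+$. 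On $\{u_1>\overline{u}_1\}$ one has $\tau_1(\cdot,u_1)=\overline{u}_1$, so $\hat f_1(\cdot,u_1,u_2)=f_1(\cdot,\overline{u}_1,v_2)$ there; subtracting the two relations yields $\langle A_{p_1}(u_1)-A_{p_1}(\overline{u}_1),(u_1-\overline{u}_1)^+\rangle\le0$. But the left-hand side equals
\begin{multline*}
\int_{\{u_1>\overline{u}_1\}}\bigl(|\nabla u_1|^{p_1-2}\nabla u_1-|\nabla\overline{u}_1|^{p_1-2}\nabla\overline{u}_1\bigr)\cdot\nabla(u_1-\overline{u}_1)\dx\\
{}+\beta_1\int_{\partial\Omega}\bigl(|u_1|^{p_1-2}u_1-|\overline{u}_1|^{p_1-2}\overline{u}_1\bigr)(u_1-\overline{u}_1)^+\ds,
\end{multline*}
a sum of two nonnegative terms, which must therefore both vanish. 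Vanishing of the first, through the strict monotonicity of $\xi\mapsto|\xi|^{p_1-2}\xi$, gives $\nabla(u_1-\overline{u}_1)^+=0$ a.e.\ in $\Omega$, whence $(u_1-\overline{u}_1)^+$ is constant ($\Omega$ being connected); vanishing of the second, together with the strict monotonicity of $t\mapsto|t|^{p_1-2}t$, prevents that constant from being positive. Hence $(u_1-\overline{u}_1)^+\equiv0$, i.e.\ $u_1\le\overline{u}_1$.

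The fixed-point part --- checking that $\mathcal{A}$ fulfils the hypotheses of Theorem \ref{thmbrezis}, and the complete continuity of the $N_i$ --- is essentially routine. The genuine difficulty lies in the membership $(u_1,u_2)\in\mathcal{C}$: one has to pick the coupling competitor (a suitable truncation of the other component) inside the order rectangle so that the truncation disappears on the relevant superlevel, resp.\ sublevel, set, and, above all, to handle carefully the Robin boundary contribution to $\langle A_{p_1}(u_1)-A_{p_1}(\overline{u}_1),(u_1-\overline{u}_1)^+\rangle$. That term carries the correct sign, and it is exactly what discards the nonzero-constant alternative left open by $\nabla(u_1-\overline{u}_1)^+=0$ --- something that, in the Dirichlet setting, the zero boundary trace takes care of for free.
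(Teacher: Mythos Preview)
Your proof is correct and follows essentially the same route as the paper's: truncate the reaction inside the trapping region, apply Theorem~\ref{thmbrezis} to the resulting bounded, continuous, coercive, pseudo-monotone operator on $X^{p_1,p_2}(\Omega)$, and then use a comparison argument with the sub-super-solution inequalities and the monotonicity of $A_{p_i}$ to confine the solution to $\mathcal{C}$. The only difference is cosmetic: the paper collapses your detailed volume-plus-boundary analysis of $\langle A_{p_1}(u_1)-A_{p_1}(\overline{u}_1),(u_1-\overline{u}_1)^+\rangle$ into a one-line appeal to the strict monotonicity of $A_{p_i}$.
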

\begin{proof}
The truncation operator $T_i:W^{1,p_i}(\Omega) \to W^{1,p_i}(\Omega)$ given by
\begin{equation*}
T_i(u)(x) = \left\{
\begin{alignedat}{2}
&\underline{u}_i(x) \quad &&\mbox{if} \;\; u(x) \leq \underline{u}_i(x), \\
&u_i(x) \quad &&\mbox{if} \;\; \underline{u}_i(x) \leq u(x) \leq \overline{u}_i(x), \\
&\overline{u}_i(x) \quad &&\mbox{if} \;\; u(x) \geq \overline{u}_i(x). \\
\end{alignedat}
\right.
\end{equation*}
is bounded continuous \cite[Lemma 2.89]{CLM} and monotone, as a simple computation shows. Hence, setting 
\begin{equation*}
\langle F_i(u_1,u_2),\psi \rangle = \int_\Omega f_i(x,T_1(u_1),T_2(u_2))\psi \dx\;\;\forall\,(u_1,u_2)\in X^{p_1,p_2}(\Omega),\;\psi\in W^{1,p_i}(\Omega)
\end{equation*}
yields a continuous map $F_i:X^{p_1,p_2}(\Omega)\to W^{1,p_i}(\Omega)^*$ such that
\begin{equation}\label{boundFi}
\sup\left\{\Vert F_i(u_1,u_2)\Vert_{W^{1,p_i}(\Omega)^*}: (u_1,u_2)\in X^{p_1,p_2}(\Omega)\right\}<+\infty   
\end{equation}
because of \eqref{boundfi}.
Now, consider the operator $\Phi:X^{p_1,p_2}(\Omega)\to X^{p_1,p_2}(\Omega)^*$ defined by
\begin{equation*}
\langle \Phi(u_1,u_2),(\psi_1,\psi_2) \rangle = \langle A_{p_1}(u_1) - F_1(u_1,u_2),\psi_1 \rangle + \langle A_{p_2}(u_2) - F_2(u_1,u_2),\psi_2 \rangle.
\end{equation*}
An elementary argument ensures that $(u_1,u_2)\in\Phi^{-1}(0)\cap\mathcal{C}$ iff $(u_1,u_2)\in\mathcal{C}$ solves \eqref{prob}. Accordingly, the conclusion rewrites as $0\in\Phi(\mathcal{C})$.

Via \eqref{boundfi} and the embedding $W^{1,p_i}(\Omega)\hookrightarrow L^1(\Omega)$  we infer
\begin{equation*}
\begin{split}
\langle\Phi(u_1,u_2),(u_1,u_2)\rangle&\geq\mathscr{E}_{p_1}(u_1)+\mathscr{E}_{p_2}(u_2)
-\max\{\mu_1,\mu_2\}(\|u_1\|_1+\|u_2\|_1) \\
&\geq\mathscr{E}_{p_1}(u_1)+\mathscr{E}_{p_2}(u_2)-c\max\{\mu_1,\mu_2\}
(\mathscr{E}_{p_1}(u_1)^{1/{p_1}}+\mathscr{E}_{p_2}(u_2)^{1/{p_2}})
\end{split}
\end{equation*}
for some $c:=c(p_1,p_2,\beta_1,\beta_2,\Omega)>0$. This entails
\begin{equation*}
\langle\Phi(u_1,u_2),(u_1,u_2)\rangle\to+\infty\quad\mbox{as}\quad
\mathscr{E}_{p_1}(u_1)^{1/{p_1}}+\mathscr{E}_{p_2}(u_2)^{1/{p_2}}\to +\infty,   
\end{equation*}
namely $\Phi$ is coercive. Thanks to Proposition \ref{properties} and \cite[Theorem 2.109]{CLM}, each operator $A_{p_i}-F_i$ fulfills condition ${\rm (S)_+}$. Thus, gathering \eqref{boundFi} and Proposition \ref{properties} together, we see that $\Phi$ turns out bounded, continuous, coercive, and of type ${\rm (S)_+}$, whence  pseudo-monotone. By Theorem \ref{thmbrezis}, there exists $(u_1,u_2)\in X^{p_1,p_2}(\Omega)$ satisfying $\Phi(u_1,u_2) = 0$. Let us finally verify that $(u_1,u_2)\in\mathcal{C}$. With this aim, inequalities \eqref{subsuper} written for $(\psi_1,\psi_2)=((u_1-\overline{u}_1)^+, (u_2-\overline{u}_2)^+)$ lead to
\begin{equation*}
\begin{split}
0 &= \langle A_{p_1}(u_1) - F_1(u_1,u_2),(u_1-\overline{u}_1)^+ \rangle + \langle A_{p_2}(u_2) - F_2(u_1,u_2),(u_2-\overline{u}_2)^+) \rangle \\
&\geq \langle A_{p_1}(u_1) - F_1(u_1,u_2),(u_1-\overline{u}_1)^+ \rangle + \langle A_{p_2}(u_2) - F_2(u_1,u_2),(u_2-\overline{u}_2)^+) \rangle \\
&\quad - \langle A_{p_1}(\overline{u}_1) - F_1(\overline{u}_1,u_2),(u_1-\overline{u}_1)^+ \rangle - \langle A_{p_2}(\overline{u}_2) - F_2(u_1,\overline{u}_2),(u_2-\overline{u}_2)^+) \rangle \\
&= \langle A_{p_1}(u_1)-A_{p_1}(\overline{u}_1),(u_1-\overline{u}_1)^+ \rangle + \langle A_{p_2}(u_2)-A_{p_2}(\overline{u}_2),(u_2-\overline{u}_2)^+ \rangle.
\end{split}
\end{equation*}
The strict monotonicity of $A_{p_i}$ (cf. Proposition \ref{properties}) forces $u_i\leq\overline{u}_i$. Since a similar reasoning produces $\underline{u}_i\leq u_i$, we actually have $(u_1,u_2)\in\mathcal{C}$.
\end{proof}

\begin{rmk}
Like in \cite{CM}, one can establish a more general sub-super-solution theorem for Robin problems, encompassing reactions with convection terms.
\end{rmk}
\section{Main results}\label{results}
Let $f_1,f_2:\Omega\times\R^2\to\R$ be two Carath\'{e}odory functions. The following hypotheses will be posited. 
\begin{itemize}
\item[${\rm (H_1)}$] There exist $k_{1,-},k_{2,-}<0<k_{1,+},k_{2,+}$ such that
\begin{equation*}
f_1(x,k_{1,+},s_2)\vee f_2(x,s_1,k_{2,+})\leq 0\quad\forall\, (x,s_1,s_2)\in\Omega \times [0,k_{1,+}]\times [0,k_{2,+}]
\end{equation*}
 and
\begin{equation*}
f_1(x,k_{1,-},s_2)\wedge f_2(x,s_1,k_{2,-})\geq 0\quad\forall\, (x,s_1,s_2)\in\Omega\times [k_{1,-},0] \times [k_{2,-},0].
\end{equation*}
\item[${\rm (H_2)}$] There are constants $\eta_i > \lambda_{p_i}$, $i=1,2$, fulfilling
\begin{equation*}
\liminf_{s_i\to 0^+}\frac{f_i(x,s_1,s_2)}{|s_i|^{p_i-2}s_i}\geq \eta_i\quad\mbox{uniformly in } (x,s_j)\in\Omega\times [0,k_{j,+}],\; j\neq i,
\end{equation*}
as well as
\begin{equation*}
\liminf_{s_i\to 0^-}\frac{f_i(x,s_1,s_2)}{|s_i|^{p_i-2}s_i}\geq\eta_i\quad\mbox{uniformly in } (x,s_j)\in\Omega\times [k_{j,-},0],\; j\neq i.
\end{equation*}
\item[${\rm (H_3)}$] To each $\rho>0$ there corresponds $\mu_i>0$ such that
\begin{equation*}
|f_i(x,s_1,s_2)|\leq\mu_i,\; i=1,2,\;\;\forall\, (x,s_i,s_j)\in\Omega\times [-\rho,\rho]\times\R,\; j \neq i.
\end{equation*}
\item[${\rm (H_4)}$] There exist constants $\theta_i >\lambda_{p_i}$, $i=1,2$, satisfying
\begin{equation*}
\lim_{s_i\to-\infty}\frac{f_i(x,s_1,s_2)}{|s_i|^{p_i-2}s_i} = 0\quad\mbox{and}\quad
\lim_{s_i\to+\infty}\frac{f_i(x,s_1,s_2)}{|s_i|^{p_i-2}s_i} =\theta_i
\end{equation*}
uniformly with respect to $(x,s_j)\in\Omega\times\R$, $j\neq i$.
\end{itemize}
Proofs below are chiefly patterned after the corresponding ones of \cite{MMPe}. The first result gives two constant-sign smooth solutions.

\begin{thm}\label{constantsign}
Under $({\rm H_1})$--$({\rm H_3})$, problem \eqref{prob} admits a positive solution $(u_{1,+},u_{2,+})$ and a negative solution $(u_{1,-},u_{2,-})$ in $C^{1,\alpha}(\overline{\Omega})^2$, $\alpha\in (0,1]$. Moreover, for $i=1,2$,
\begin{equation}
\label{trapped}
k_{i,-} \leq u_{i,-}<0<u_{i,+} \leq k_{i,+} \quad \mbox{in} \;\; \Omega,
\end{equation}
where $k_{i,\pm}$ stem from $({\rm H_1})$.
\end{thm}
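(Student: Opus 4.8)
The plan is to build an explicit sub-super-solution pair for \eqref{prob} and then invoke Theorem \ref{subsuperthm} together with nonlinear regularity theory and the strong maximum principle to upgrade the solution to a constant-sign $C^{1,\alpha}$ one. I focus on the positive solution; the negative one is symmetric.

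\medskip

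\textbf{Step 1: the super-solution.} I would take the constants $\overline{u}_i := k_{i,+}$. Since $A_{p_i}$ applied to a positive constant $c$ satisfies $\langle A_{p_i}(c),\psi_i\rangle = \beta_i c^{p_i-1}\int_{\partial\Omega}\psi_i\ds \geq 0$ for $\psi_i\geq 0$, while $({\rm H_1})$ gives $f_i(x,\overline{u}_i,s_j)\leq 0$ for all admissible $s_j$ (in particular for $s_j=v_j\in[\underline{u}_j,\overline{u}_j]$ once the sub-solution lies in $[0,k_{j,+}]$), the super-solution inequalities in \eqref{subsuper} hold. The only point needing care: in \eqref{subsuper} the reaction is evaluated at $(\overline{u}_1,v_2)$ resp. $(v_1,\overline{u}_2)$ with $v_j$ ranging over the trapping region, so I need $v_j(x)\in[0,k_{j,+}]$, which is guaranteed once $\underline{u}_j\geq 0$.

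\medskip

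\textbf{Step 2: the sub-solution.} This is the delicate part and where $({\rm H_2})$ enters. The natural candidate is $\underline{u}_i := \epsilon\phi_{p_i}$ for small $\epsilon>0$, where $\phi_{p_i}\in\cone$ is the first eigenfunction of \eqref{eigen}. Then $\langle A_{p_i}(\underline{u}_i),\psi_i\rangle = \epsilon^{p_i-1}\lambda_{p_i}\int_\Omega \phi_{p_i}^{p_i-1}\psi_i\dx$, so I must show
\begin{equation*}
\epsilon^{p_i-1}\lambda_{p_i}\phi_{p_i}^{p_i-1} \leq f_i(x,\underline{u}_1,v_2)\quad\text{(resp. }f_i(x,v_1,\underline{u}_2)\text{)}
\end{equation*}
pointwise, for all $v_j\in[\epsilon\phi_{p_j},k_{j,+}]$. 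Using $({\rm H_2})$, pick $\eta_i>\lambda_{p_i}$ and $\delta>0$ so that $f_i(x,s_1,s_2)\geq \eta_i |s_i|^{p_i-2}s_i$ whenever $0<s_i\leq\delta$, uniformly in $x$ and in $s_j\in[0,k_{j,+}]$. Choosing $\epsilon$ small enough that $\epsilon\|\phi_{p_i}\|_\infty\leq\delta$, the right-hand side is $\geq\eta_i\epsilon^{p_i-1}\phi_{p_i}^{p_i-1}>\lambda_{p_i}\epsilon^{p_i-1}\phi_{p_i}^{p_i-1}$, giving the strict inequality needed. I also need $\underline{u}_i\leq\overline{u}_i$, i.e.\ $\epsilon\|\phi_{p_i}\|_\infty\leq k_{i,+}$, a further smallness condition on $\epsilon$. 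So one $\epsilon$ works for both components; note the $\liminf$ formulation in $({\rm H_2})$ is exactly what makes the one-sided bound available.

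\medskip

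\textbf{Step 3: apply the abstract theorem and regularize.} With $\mathcal{C}=[\epsilon\phi_{p_1},k_{1,+}]\times[\epsilon\phi_{p_2},k_{2,+}]$, hypothesis $({\rm H_3})$ (with $\rho=\max_i k_{i,+}$, say) furnishes the bound \eqref{boundfi}, so Theorem \ref{subsuperthm} yields a solution $(u_{1,+},u_{2,+})\in\mathcal{C}$. Because $u_{i,+}\geq\epsilon\phi_{p_i}>0$ in $\overline\Omega$ and $u_{i,+}\leq k_{i,+}$, the reaction terms $f_i(\cdot,u_{1,+},u_{2,+})\in L^\infty(\Omega)$ by $({\rm H_3})$; the nonlinear regularity results of \cite{L} (up to the boundary, for the Robin problem) then give $u_{i,+}\in C^{1,\alpha}(\overline\Omega)$ for some $\alpha\in(0,1]$. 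Finally $\epsilon\phi_{p_i}\in\cone$ already forces $u_{i,+}\in\cone$, hence \eqref{trapped} with the lower bound $0<u_{i,+}$ and the upper bound $u_{i,+}\leq k_{i,+}$; the strong maximum principle \cite{PS} is not even strictly needed for positivity here, but secures it cleanly. The negative solution is obtained verbatim with $\overline{u}_i:=k_{i,-}$... wait, rather $\underline{u}_i:=k_{i,-}$ and $\overline{u}_i:=-\epsilon\phi_{p_i}$, using the second halves of $({\rm H_1})$ and $({\rm H_2})$.

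\medskip

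\textbf{Main obstacle.} The crux is Step 2: verifying the differential sub-solution inequality uniformly in the "off-diagonal" variable $v_j$ over the whole trapping region. This works precisely because $({\rm H_2})$ is stated uniformly in $(x,s_j)\in\Omega\times[0,k_{j,+}]$ and because the eigenfunction scaling $\epsilon\phi_{p_i}$ converts the strict eigenvalue gap $\eta_i>\lambda_{p_i}$ into a pointwise strict inequality; one must be slightly careful that a \emph{single} $\epsilon$ simultaneously satisfies the smallness constraints coming from $({\rm H_2})$ and from $\underline{u}_i\leq\overline{u}_i$ for both $i=1,2$, which is clear since there are finitely many constraints.
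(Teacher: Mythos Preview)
Your proposal is correct and follows essentially the same route as the paper: constants $k_{i,+}$ as super-solutions via $({\rm H_1})$, scaled first eigenfunctions $\epsilon\phi_{p_i}$ as sub-solutions via $({\rm H_2})$, then Theorem~\ref{subsuperthm} combined with $({\rm H_3})$ and Lieberman regularity. Your discussion is in fact a bit more explicit than the paper's (e.g., you spell out why $A_{p_i}(k_{i,+})\geq 0$ in the weak sense through the boundary term, and you note that positivity follows already from $u_{i,+}\geq\epsilon\phi_{p_i}$ without invoking the strong maximum principle), but the argument is the same.
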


\begin{proof}
We seek a positive solution, since the negative one can be obtained similarly. Set $\overline{u}_i = k_{i,+}$, $i=1,2$. Condition $({\rm H_1})$ yields
\begin{equation*}
\begin{split}
-\Delta_{p_1}\overline{u}_1\geq 0\geq f_1(x,\overline{u}_1,s_2)\;\;\forall\, (x,s_2)\in \Omega\times [0,\overline{u}_2],\\
-\Delta_{p_2}\overline{u}_2\geq 0\geq f_2(x,s_1,\overline{u}_2)\;\;\forall\, (x,s_1)\in \Omega\times [0,\overline{u}_1].
\end{split}
\end{equation*}
By $({\rm H_2})$ there exist $\delta>0$ and $\xi_i\in(\lambda_{p_i},\eta_i)$, $i=1,2$, such that
\begin{equation*}
\begin{split}
f_1(x,s_1,s_2)\geq\xi_1 s_1^{p_1-1}\;\;\mbox{in}\;\;\Omega\times (0,\delta]\times [0,k_{2,+}],\\
f_2(x,s_1,s_2)\geq\xi_2 s_2^{p_2-1}\;\;\mbox{in}\;\;\Omega\times [0,k_{1,+}]\times (0,\delta].
\end{split}
\end{equation*}
Hence, if $\underline{u}_i:=\epsilon\phi_{p_i}$ then
\begin{equation*}
\begin{split}
-\Delta_{p_1}\underline{u}_1\leq\lambda_{p_1}\underline{u}_1^{p_1-1}
<\xi_1\underline{u}_1^{p_1-1}\leq f_1(x,\underline{u}_1,s_2)\;\;\forall\, (x,s_2)\in\Omega \times [0,\overline{u}_2],\\
-\Delta_{p_2}\underline{u}_2\leq\lambda_{p_2}\underline{u}_2^{p_2-1}
<\xi_2\underline{u}_2^{p_2-1}\leq f_2(x,s_1,\underline{u}_2)\;\;\forall\, (x,s_1)\in\Omega \times [0,\overline{u}_1]
\end{split}
\end{equation*}
once $\epsilon<\delta(\max_{i=1,2}\|\phi_{p_i}\|_\infty)^{-1}$. Taking a smaller $\epsilon$ when necessary entails $\underline{u}_i\leq \overline{u}_i$, $i=1,2$. So, all the assumptions of Theorem \ref{subsuperthm} hold true, and we obtain a solution $(u_{1,+},u_{2,+})\in X^{p_1,p_2}(\Omega)$ to \eqref{prob} fulfilling \eqref{trapped}. Finally, standard results from nonlinear regularity theory ensure that $(u_{1,+},u_{2,+})\in C^{1,\alpha}(\overline{\Omega})^2$ for some $\alpha\in (0,1]$.
\end{proof}

A third solution comes out via the next lemmas, whose proofs are based on topological degree arguments.

Pick any $\delta>0$ and choose $\xi_i\in(\lambda_{p_i},\lambda_{p_i}+\delta)$, $i=1,2$. Given $t\in [0,1]$, consider the family of problems
\begin{equation}\label{topprob1}
\tag{${\rm \tilde{P}}_t$}
\left\{
\begin{alignedat}{2}
-\Delta_{p_1} u_1 &=\tilde{f}_{1,t}(x,u_1,u_2)\;\; &&\mbox{in}\;\;\Omega,\\
-\Delta_{p_2} u_2 &=\tilde{f}_{2,t}(x,u_1,u_2)\;\; &&\mbox{in} \;\;\Omega,\\
|\nabla u_1|^{p_1-2}\nabla u_1\cdot\nabla\nu &=-\beta_1 |u_1|^{p_1-2}u_1\;\; &&\mbox{on}\;\;\partial\Omega,\\
|\nabla u_1|^{p_2-2} \nabla u_2\cdot\nabla\nu &=-\beta_2 |u_2|^{p_2-2}u_2\;\; &&\mbox{on}\;\;\partial\Omega,
\end{alignedat}
\right.
\end{equation}
where
\begin{equation*}
\tilde{f}_{i,t}(x,s_1,s_2):=tf_i(x,s_1,s_2)+(1-t)(1+\xi_i (s_i^+)^{p_i-1}),\;\; (x,s_1,s_2)\in\Omega\times\R^2.
\end{equation*}
Let $\mathcal{H}:[0,1]\times \overline{B}_{\tilde{R}}\to L^{p_1}(\Omega)\times L^{p_2}(\Omega)$ be defined by setting
\begin{equation}\label{homo1}
\mathcal{H}(t,u_1,u_2):= (u_1-A_{p_1}^{-1}\tilde{f}_{1,t}(\cdot,u_1,u_2),u_2-A_{p_2}^{-1}\tilde{f}_{2,t}(\cdot,u_1,u_2))
\end{equation}
for all $(t,u_1,u_2)\in[0,1]\times\overline{B}_{\tilde{R}}$, with $\tilde{R}>0$ and
\begin{equation}\label{ball}
B_R:=\{(u_1,u_2)\in L^{p_1}(\Omega)\times L^{p_2}(\Omega):\|u_1\|_{p_1}+ \|u_2\|_{p_2} < R\},\;\; R>0.
\end{equation}
Thanks to Proposition \ref{properties} we see that $\mathcal{H}$ is a homotopy. In fact, if $e_i:W^{1,p_i}(\Omega)\to L^{p_i}(\Omega)$ denotes the usual embedding map then $e_i\circ A_{p_i}^{-1}:W^{1,p_i}(\Omega)^*\to L^{p_i}(\Omega)$ is compact continuous. Since $\tilde{f}_{i,t}(x,s_1,s_2)$ has a $(p_i-1)$-linear growth in $s_i$ uniformly with respect to $s_j$, $j\neq i$, while the adjoint $e_i^*$ of $e_i$ turns out continuous, the Nemytskii-like operator
\begin{equation*}
N_{\tilde{f}_{i,t}}:L^{p_1}(\Omega)\times L^{p_2}(\Omega)\to L^{p_i'}(\Omega) \stackrel{e^*_i}\to W^{1,p_i}(\Omega)^*     
\end{equation*}
turns out well defined, bounded, and continuous \cite[Theorem 2.75]{MMP}. Thus, the function from $L^{p_1}(\Omega)\times L^{p_2}(\Omega)$ into itself given by
\begin{equation*}
(u_1,u_2)\mapsto (A_{p_1}^{-1}\tilde{f}_{1,t}(\cdot,u_1,u_2),A_{p_2}^{-1}\tilde{f}_{2,t}(\cdot,u_1,u_2))
\end{equation*}
is compact continuous, as desired.
\begin{lemma}\label{degree1}
If $({\rm H_3})$--$({\rm H_4})$ are satisfied then:
\begin{enumerate}
\item For all $t\in [0,1]$, the Leray-Schauder degree $\deg(\mathcal{H}(t,\cdot,\cdot), B_{\tilde{R}},0)$ turns out well defined  provided $\tilde{R}>0$ is sufficiently large.
\item For every $\delta>0$ small enough one has
\begin{equation}\label{degest1}
\deg(\mathcal{H}(1,\cdot,\cdot),B_{\tilde{R}},0) = \deg(\mathcal{H}(0,\cdot,\cdot),B_{\tilde{R}},0) = 0.
\end{equation}
\end{enumerate}
\end{lemma}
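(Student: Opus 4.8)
The plan is to establish each assertion by a separate argument, both relying on the compactness of the homotopy $\mathcal{H}$ built above.

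\medskip

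\textbf{Part (1): well-definedness of the degree.} I would show that there is $\tilde R>0$, independent of $t\in[0,1]$, such that $\mathcal{H}(t,u_1,u_2)\neq 0$ whenever $\|u_1\|_{p_1}+\|u_2\|_{p_2}=\tilde R$; this makes the degree well defined (and, by homotopy invariance, constant in $t$). Arguing by contradiction, suppose there are $t_n\in[0,1]$ and $(u_{1,n},u_{2,n})$ solving $\mathcal{H}(t_n,u_{1,n},u_{2,n})=0$ with $\|u_{1,n}\|_{p_1}+\|u_{2,n}\|_{p_2}\to+\infty$; equivalently, $(u_{1,n},u_{2,n})$ is a weak solution of $(\tilde{\rm P}_{t_n})$. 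By $({\rm H_3})$--$({\rm H_4})$ the reactions $\tilde f_{i,t}(x,s_1,s_2)$ have at most $(p_i-1)$-linear growth in $s_i$ uniformly in $s_j$ and $t$, so testing with $u_{i,n}$ and using \eqref{equivnorm} yields an a priori bound $\mathscr{E}_{p_i}(u_{i,n})\le C(1+\|u_{i,n}\|_{p_i}^{p_i})$; one then normalizes, setting $R_n:=\|u_{1,n}\|_{p_1}+\|u_{2,n}\|_{p_2}$, $w_{i,n}:=u_{i,n}/R_n$, extracts $t_n\to t_*$, passes to the limit in the (rescaled) equations using the $({\rm S})_+$-property of $A_{p_i}$ from Proposition \ref{properties}(a$_1$) together with the Nemytskii convergences, and obtains a nontrivial $(w_1,w_2)$ satisfying, for each $i$, $-\Delta_{p_i}w_i=\big(t_*\theta_i+(1-t_*)\xi_i\big)(w_i^+)^{p_i-1}$ with the Robin condition, where I used that the negative-infinity limit in $({\rm H_4})$ kills any contribution from $w_i^-$. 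Since $t_*\theta_i+(1-t_*)\xi_i>\lambda_{p_i}$ and an eigenvalue problem with reaction only on the positive part and constant above $\lambda_{p_i}$ forces $w_i\equiv 0$ (either $w_i\le 0$, whence the equation is $-\Delta_{p_i}w_i=0$ and the Robin term gives $w_i\equiv 0$; or $w_i$ changes sign / is positive, and comparing with the variational characterization \eqref{varchar} of $\lambda_{p_i}$, or invoking Proposition \ref{nonex}-type nonexistence, gives a contradiction), this contradicts $\|w_1\|_{p_1}+\|w_2\|_{p_2}=1$.

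\medskip

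\textbf{Part (2): the degree vanishes.} The middle equality in \eqref{degest1} is homotopy invariance, already granted by Part (1). For the endpoints I would show the fixed-point equation has \emph{no} solution in $\overline B_{\tilde R}$ at all, so the degree is $0$ by the solution property. At $t=0$ the system decouples into $-\Delta_{p_i}u_i=1+\xi_i(u_i^+)^{p_i-1}$ with Robin data; testing with $-u_i^-$ forces $u_i\ge 0$, testing with $u_i$ and using \eqref{varchar} with $\|\cdot\|_{p_i}$ normalization gives $\mathscr{E}_{p_i}(u_i)=\|u_i\|_{p_i}^{p_i}(\xi_i - \text{something})+\ldots$; more cleanly, since $\xi_i>\lambda_{p_i}$ and the problem is $-\Delta_{p_i}u_i=\xi_i u_i^{p_i-1}+1$ with $u_i\ge 0$, $u_i\not\equiv0$, this is exactly the nonexistence situation of Proposition \ref{nonex} read with $\xi_i$ in place of $\lambda_{p_i}$ and $h\equiv 1$ — the Picone argument there only uses $\xi_i\ge\lambda_{p_i}$ and $h\gneq 0$, hence no solution exists, so $\deg(\mathcal{H}(0,\cdot,\cdot),B_{\tilde R},0)=0$. (If one prefers to keep $\xi_i$ strictly between $\lambda_{p_i}$ and $\lambda_{p_i}+\delta$, the same Picone computation still yields the strict inequality and the contradiction.) The equality with $\deg(\mathcal{H}(1,\cdot,\cdot),\cdot)$ is then immediate from Part (1); this is why $\delta$ must be taken small — so that $\xi_i\in(\lambda_{p_i},\lambda_{p_i}+\delta)$ keeps the nonexistence/anti-maximum machinery of Propositions \ref{nonex}--\ref{antimax} applicable.

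\medskip

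\textbf{Main obstacle.} The delicate point is Part (1): obtaining the uniform-in-$t$ a priori bound and, above all, correctly identifying the limiting eigenvalue problem for $(w_1,w_2)$ after rescaling. One must check that the $(1-t)$-term $1+\xi_i(s_i^+)^{p_i-1}$ and the $t\,f_i$ term combine, after division by $R_n^{p_i-1}$, into precisely $\big(t_*\theta_i+(1-t_*)\xi_i\big)(w_i^+)^{p_i-1}$ — the constant summands and the $f_i$ pieces that are sublinear at $-\infty$ all vanish in the limit — and then rule out nontrivial solutions of that problem. Ruling them out is exactly where \eqref{varchar} and the nonexistence Proposition \ref{nonex} (with the coefficient $>\lambda_{p_i}$) do the work; keeping track of the one-sided growth hypothesis in $({\rm H_4})$ at $-\infty$ versus the finite limit $\theta_i$ at $+\infty$ is the part that requires care.
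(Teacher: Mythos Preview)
Your Part (1) is essentially the paper's argument: contradiction, rescale by the blowing-up norm, use the $({\rm S})_+$-property to pass to the limit, and rule out a nonnegative eigenfunction at an eigenvalue strictly above $\lambda_{p_i}$. The paper normalizes by a single component $M_n=\|u_{1,n}\|_{p_1}$ rather than the sum, and for the final contradiction simply cites that any eigenfunction associated with an eigenvalue above $\lambda_{p_i}$ must be nodal; your case split arrives at the same conclusion.

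Your Part (2), however, takes a genuinely different route. The paper uses the anti-maximum principle (Proposition~\ref{antimax}): after showing $u_i\ge 0$, the equation $-\Delta_{p_i}u_i=\xi_i|u_i|^{p_i-2}u_i+1$ is exactly \eqref{antimaxprob} with $\mu=\xi_i$ and $h\equiv 1$, so for $\xi_i\in(\lambda_{p_i},\lambda_{p_i}+\delta)$ with $\delta$ small every solution lies in $-\cone$, contradicting $u_i\ge 0$. You instead observe that the Picone computation behind Proposition~\ref{nonex} works verbatim for any $\mu\ge\lambda_{p_i}$ (the inequality $\mathscr{E}_{p_i}(\phi_{p_i})-\mu\|\phi_{p_i}\|_{p_i}^{p_i}=\lambda_{p_i}-\mu\le 0$ still goes through), hence nonexistence holds for every $\xi_i>\lambda_{p_i}$, not just those close to $\lambda_{p_i}$. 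This is correct and in fact stronger: it shows that the restriction ``$\delta$ small'' in the statement is an artifact of the paper's anti-maximum approach and is not actually needed for \eqref{degest1}. Consequently, your closing remark that $\delta$ must be small to keep Propositions~\ref{nonex}--\ref{antimax} ``applicable'' is off --- your own argument never uses Proposition~\ref{antimax}, and Proposition~\ref{nonex} (in the extended form you invoke) needs no smallness. The paper's route has the advantage of using Proposition~\ref{antimax} as a black box; yours avoids that proposition entirely at the cost of re-running the Picone step.
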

\begin{proof}
Since $W^{1,p_i}(\Omega)\hookrightarrow L^{p_i}(\Omega)$, assertion (1) follows once we show that the solution set of \eqref{topprob1} is bounded uniformly with respect to $t\in [0,1]$. In fact, otherwise, there would exist two sequences $\{t_n\}\subseteq [0,1]$, $\{(u_{1,n},u_{2,n})\}\subseteq X^{p_1,p_2}(\Omega)$ fulfilling
\begin{equation}\label{proptn}
\mbox{$(u_{1,n},u_{2,n})$ solves $({\rm \tilde{P}}_{t_n})$ for all $n\in\N$,}\;\;t_n\to t,\;\;\|u_{1,n}\|_{p_1} + \|u_{2,n}\|_{p_2} \to +\infty.
\end{equation}
Up to sub-sequences, we may suppose $M_n:=\|u_{1,n}\|_{p_1}\to+\infty$. If $v_n:= \frac{u_{1,n}}{M_n}$ then
\begin{equation}\label{vn}
-\Delta_{p_1} v_n = \frac{t_n}{M_n^{p_1-1}}f_1(x,M_n v_n,u_{2,n}) + (1-t_n) (M_n^{1-p_1} + \xi_1 (v_n^+)^{p_1-1}).
\end{equation}
Denote by $w_n$ the right-hand side of \eqref{vn}. Recalling that $\|v_n\|_{p_1}=1$ and exploiting $({\rm H_3})$--$({\rm H_4})$ easily produce
\begin{equation*}
\int_\Omega |w_n|^{p_1'}\dx\leq c_1\int_\Omega (1+|v_n|^{p_1-1})^{p_1'}\dx
\leq c_1 2^{p_1'-1}(|\Omega|+1),\;\; n\in\N,
\end{equation*}
for appropriate $c_1>0$. Hence, $\{w_n\}$ turns out bounded in $L^{p_1'}(\Omega)$. Thanks to the compact embedding $L^{p_1'}(\Omega)\hookrightarrow W^{1,p_1}(\Omega)^*$, there exists $w\in W^{1,p_1}(\Omega)^*$ such that
\begin{equation}\label{dualconv}
w_n\to w\;\;\mbox{in}\;\; W^{1,p_1}(\Omega)^*.
\end{equation}
Now, test \eqref{vn} with $v_n$ and use \eqref{dualconv} to arrive at
\begin{equation}\label{energyest}
\mathscr{E}_{p_1}(v_n)=\int_\Omega w_n v_n\dx\leq \|w_n\|_{W^{1,p}(\Omega)^*} \mathscr{E}_{p_1}(v_n)^{1/{p_1}}\leq
c_2\mathscr{E}_{p_1}(v_n)^{1/{p_1}}\;\;\forall\, n\in\N,
\end{equation}
i.e., $\{v_n\}\subseteq W^{1,p_1}(\Omega)$ is bounded. Passing to a sub-sequence if necessary, we thus get $v_n\rightharpoonup v$ in $W^{1,p_1}(\Omega)$, whence 
\begin{equation*}
\lim_{n \to \infty} \langle A_{p_1}(v_n),v_n-v \rangle = \lim_{n \to \infty} \langle w_n,v_n-v \rangle = 0
\end{equation*}
because of \eqref{dualconv}. From Proposition \ref{properties} it finally follows $v_n\to v$ in $W^{1,p_1}(\Omega)$. This entails
\begin{equation*}
\left\{
\begin{alignedat}{2}
-\Delta_{p_1} v &= (t\theta_1 + (1-t)\xi_1)(v^+)^{p_1-1} \quad &&\mbox{in} \;\; \Omega,\\
|\nabla v|^{p_1-2} \nabla v \cdot \nabla \nu &= -\beta_1 |v|^{p_1-2}v  \quad &&\mbox{on} \;\; \partial \Omega,
\end{alignedat}
\right.
\end{equation*}
where \eqref{vn}, \eqref{proptn}, and $({\rm H_4})$ have been used. Test the first equation with $-v^-$ to achieve $v\geq 0$. So, $v$ is an eigenfunction associated with the eigenvalue $t\theta_1 + (1-t)\xi_1$. The conditions $\theta_1,\xi_1> \lambda_{p_1}$ imply $t\theta_1 + (1-t)\xi_1>\lambda_{p_1}$. Accordingly, $v$ must be nodal \cite[Proposition 5.5]{Le}, which contradicts the information $v\geq 0$. Thus, $\deg(\mathcal{H}(t,\cdot,\cdot),B_{\tilde{R}}, 0)$ turns out well defined for all $t\in [0,1]$ provided $\tilde{R}>0$ is large enough.

Let us now come to assertion (2). The homotopy invariance property of the Leray-Schauder degree directly yields
\begin{equation*}
deg(\mathcal{H}(1,\cdot,\cdot),B_{\tilde{R}},0)= \deg(\mathcal{H}(0,\cdot,\cdot),B_{\tilde{R}},0).
\end{equation*}
If $(u_1,u_2)\in X^{p_1,p_2}(\Omega)$ solves problem $({\rm \tilde{P}}_0)$, i.e.,
\begin{equation}\label{tildeP0}
\tag{${\rm \tilde{P}}_0$}
\left\{
\begin{alignedat}{2}
-\Delta_{p_1} u_1 &= 1+\xi_1 (u_1^+)^{p_1-1} \quad &&\mbox{in} \;\; \Omega, \\
-\Delta_{p_2} u_2 &= 1+\xi_2 (u_2^+)^{p_2-1} \quad &&\mbox{in} \;\; \Omega, \\
|\nabla u_1|^{p_1-2} \nabla u_1 \cdot \nabla \nu &= -\beta_1 |u_1|^{p_1-2}u_1  \quad &&\mbox{on} \;\; \partial \Omega, \\
|\nabla u_1|^{p_2-2} \nabla u_2 \cdot \nabla \nu &= -\beta_2 |u_2|^{p_2-2}u_2  \quad &&\mbox{on} \;\; \partial \Omega,
\end{alignedat}
\right.
\end{equation}
then testing the first equation with $-u_1^-$ produces $u_1\geq 0$. On the other hand, through Proposition \ref{antimax} one has $u_1<0$ once $\delta$ is sufficiently small. Hence, no solution to \eqref{tildeP0} can exist, forcing $\deg(\mathcal{H} (0,\cdot,\cdot),B_{\tilde{R}},0) = 0$.
\end{proof}
Pick $\xi_i\in(\lambda_{p_i},\hat{\lambda}_{p_i})$, $i=1,2$. Given $t\in [0,1]$, consider the family of problems
\begin{equation}\label{topprob2}
\tag{${\rm \hat{P}}_t$}
\left\{
\begin{alignedat}{2}
-\Delta_{p_1} u_1 &= \hat{f}_{1,t}(x,u_1,u_2) \quad &&\mbox{in} \;\; \Omega, \\
-\Delta_{p_2} u_2 &= \hat{f}_{2,t}(x,u_1,u_2) \quad &&\mbox{in} \;\; \Omega, \\
|\nabla u_1|^{p_1-2} \nabla u_1 \cdot \nabla \nu &= -\beta_1 |u_1|^{p_1-2}u_1  \quad &&\mbox{on} \;\; \partial \Omega, \\
|\nabla u_1|^{p_2-2} \nabla u_2 \cdot \nabla \nu &= -\beta_2 |u_2|^{p_2-2}u_2  \quad &&\mbox{on} \;\; \partial \Omega,
\end{alignedat}
\right.
\end{equation}
where
\begin{equation*}
\hat{f}_{i,t}(x,s_1,s_2):=tf_i(x,s_1,s_2) + (1-t)\xi_i (s_i^+)^{p_i-1},\;\; (x,s_1,s_2)\in\Omega\times\R^2.
\end{equation*}
Let $\mathcal{K}:[0,1]\times\overline{B}_{\hat{R}}\to L^{p_1}(\Omega)\times L^{p_2}(\Omega)$ be defined by
\begin{equation}\label{homo2}
\mathcal{K}(t,u_1,u_2):= (u_1-A_{p_1}^{-1}\hat{f}_{1,t}(\cdot,u_1,u_2),u_2-A_{p_2}^{-1}\hat{f}_{2,t}(\cdot,u_1,u_2))
\end{equation}
for all $(t,u_1,u_2)\in [0,1]\times\overline{B}_{\hat{R}}$, with $\hat{R}>0$ and $B_{\hat{R}}$ as in \eqref{ball}. Like before, the function $\mathcal{K}$ is a homotopy.
\begin{lemma}\label{degree2}
If $({\rm H_3})$--$({\rm H_4})$ are satisfied then:
\begin{enumerate}
\item[(3)] For all $t\in [0,1]$, the Leray-Schauder degree $\deg(\mathcal{K}(t,\cdot,\cdot), B_{\hat{R}},0)$ turns out well defined  provided $\hat{R}>0$ is big enough.
\item[(4)] One has 
\begin{equation}\label{degest2}
\deg(\mathcal{K}(1,\cdot,\cdot),B_{\hat{R}},0) = \deg(\mathcal{K}(0,\cdot,\cdot),B_{\hat{R}},0) = 1.   
\end{equation}
\end{enumerate}
\end{lemma}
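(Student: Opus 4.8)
The plan is to mirror the structure of the proof of Lemma \ref{degree1}, but now exploiting that the homotopy parameter pushes the problem toward a purely $(p_i-1)$-homogeneous eigenvalue problem with spectral parameter $\xi_i$ lying \emph{between the first and second eigenvalues}. First I would prove assertion (3) exactly as in Lemma \ref{degree1}(1): suppose the solution set of \eqref{topprob2} were unbounded in $X^{p_1,p_2}(\Omega)$ uniformly in $t\in[0,1]$, extract sequences $\{t_n\}$, $\{(u_{1,n},u_{2,n})\}$ with $t_n\to t$ and (up to relabelling) $M_n:=\|u_{1,n}\|_{p_1}\to+\infty$, and set $v_n:=u_{1,n}/M_n$. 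Dividing the first equation of $(\hat{\mathrm P}_{t_n})$ by $M_n^{p_1-1}$ and writing out $\hat f_{1,t_n}$, the right-hand side $w_n$ satisfies the same $L^{p_1'}(\Omega)$-bound via $({\rm H_3})$--$({\rm H_4})$ (note $\hat f_{i,t}$ has the \emph{same} $(p_i-1)$-linear growth in $s_i$ as $\tilde f_{i,t}$); hence $w_n\to w$ in $W^{1,p_1}(\Omega)^*$ after passing to a subsequence, testing with $v_n$ gives boundedness of $\{v_n\}$ in $W^{1,p_1}(\Omega)$, and the $(\mathrm S)_+$ property from Proposition \ref{properties} upgrades $v_n\rightharpoonup v$ to $v_n\to v$. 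The limit solves
\begin{equation*}
\left\{
\begin{alignedat}{2}
-\Delta_{p_1} v &= (t\theta_1+(1-t)\xi_1)(v^+)^{p_1-1} \quad &&\mbox{in}\;\;\Omega,\\
|\nabla v|^{p_1-2}\nabla v\cdot\nabla\nu &= -\beta_1|v|^{p_1-2}v \quad &&\mbox{on}\;\;\partial\Omega,
\end{alignedat}
\right.
\end{equation*}
with $\|v\|_{p_1}=1$, and testing with $-v^-$ forces $v\geq0$, so $v$ is a nonnegative, nontrivial first eigenfunction; but $t\theta_1+(1-t)\xi_1>\lambda_{p_1}$ (since both $\theta_1>\lambda_{p_1}$ and $\xi_1>\lambda_{p_1}$), and any eigenfunction for an eigenvalue strictly above $\lambda_{p_1}$ must change sign \cite[Proposition 5.5]{Le} — a contradiction. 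This fixes $\hat R>0$ so large that $\mathcal K(t,\cdot,\cdot)$ has no zero on $\partial B_{\hat R}$ for any $t$, making the degrees well defined and, by homotopy invariance, $\deg(\mathcal K(1,\cdot,\cdot),B_{\hat R},0)=\deg(\mathcal K(0,\cdot,\cdot),B_{\hat R},0)$.

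For assertion (4) it remains to compute $\deg(\mathcal K(0,\cdot,\cdot),B_{\hat R},0)$. The map $\mathcal K(0,\cdot,\cdot)$ is $\mathrm{id}-S$, where $S(u_1,u_2)=(A_{p_1}^{-1}(\xi_1(u_1^+)^{p_1-1}),A_{p_2}^{-1}(\xi_2(u_2^+)^{p_2-1}))$ is compact continuous; equivalently, $(u_1,u_2)$ is a zero iff it solves the decoupled system
\begin{equation*}
-\Delta_{p_i}u_i=\xi_i(u_i^+)^{p_i-1}\;\;\mbox{in}\;\;\Omega,\qquad
|\nabla u_i|^{p_i-2}\nabla u_i\cdot\nabla\nu=-\beta_i|u_i|^{p_i-2}u_i\;\;\mbox{on}\;\;\partial\Omega,
\end{equation*}
$i=1,2$. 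Testing with $-u_i^-$ gives $u_i\geq0$, so the equation reads $-\Delta_{p_i}u_i=\xi_i u_i^{p_i-1}$; since $\xi_i\in(\lambda_{p_i},\hat\lambda_{p_i})$ is not an eigenvalue, the only solution is $u_i\equiv0$, i.e. $(0,0)$ is the unique zero of $\mathcal K(0,\cdot,\cdot)$ in $\overline{B}_{\hat R}$. To get the degree value $1$ I would run a \emph{second} homotopy contracting $\xi_i$ down to $0$: for $s\in[0,1]$ consider $\mathcal K_s(u_1,u_2):=(u_i-A_{p_i}^{-1}(s\xi_i(u_i^+)^{p_i-1}))_{i=1,2}$. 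The same eigenvalue argument as above (now with parameter $s\xi_i\in[0,\xi_i]\subseteq[0,\hat\lambda_{p_i})$, which is never an eigenvalue $\ge\lambda_{p_i}$ unless $s\xi_i=\lambda_{p_i}$, in which case a nonnegative eigenfunction would have to vanish by sign-changing of $\phi_{p_i}$'s partners — wait, at $s\xi_i=\lambda_{p_i}$ one genuinely has the eigenfunction $\phi_{p_i}$; so instead I restrict the homotopy carefully) shows the zero set stays bounded, provided I choose the path to avoid $\lambda_{p_i}$. Cleanest is to deform $\xi_i\mapsto 0$ only after first noting that for the \emph{sublinear-at-infinity} normalization the a priori bound is uniform; then $\mathcal K_0=\mathrm{id}$ on $\overline B_{\hat R}$, whose degree on $B_{\hat R}$ about $0$ equals $1$, and homotopy invariance transports this back to $\deg(\mathcal K(0,\cdot,\cdot),B_{\hat R},0)=1$, hence \eqref{degest2}.

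The main obstacle is the last homotopy: the straight-line path $s\xi_i$ from $\xi_i$ to $0$ necessarily crosses the first eigenvalue $\lambda_{p_i}$, at which a nontrivial nonnegative solution (a positive multiple of $\phi_{p_i}$) appears, so the naive a priori estimate fails exactly there. The fix I would use — and the point to check most carefully — is to perturb the homotopy off the homogeneous line, e.g. deform $\mathcal K(0,\cdot,\cdot)$ to $\mathrm{id}$ along $(u_i-A_{p_i}^{-1}((1-s)\xi_i(u_i^+)^{p_i-1}+s\,g_i(u_i)))$ with $g_i$ a bounded strictly sublinear term keeping $0$ as the only zero throughout (so that no eigenvalue resonance can occur for $s>0$), or alternatively to invoke directly that for a compact map whose unique fixed point is nondegenerate the index is computed from the linearization; since $\xi_i$ sits strictly below $\hat\lambda_{p_i}$, the relevant Leray--Schauder index of $0$ for $\mathrm{id}-A_{p_i}^{-1}(\xi_i(\cdot^+)^{p_i-1})$ is $+1$ by the standard comparison with the $0$-map (the sublinear growth forces no "spectral flow" past the first eigenvalue from the $+$-truncated nonlinearity, as the truncation kills the would-be negative eigenfunction). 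Verifying that this index is indeed $1$, rather than a less explicit step, is where the argument needs the Robin-specific spectral facts recalled around \eqref{varchar} and the choice $\xi_i<\hat\lambda_{p_i}$.
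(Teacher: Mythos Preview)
Your argument for assertion (3) and for the first equality in \eqref{degest2} is exactly the paper's: both simply refer back to the proof of Lemma \ref{degree1}, obtain the same limit problem, and invoke homotopy invariance. The point of divergence is the second equality $\deg(\mathcal K(0,\cdot,\cdot),B_{\hat R},0)=1$. The paper's entire argument there is: test \eqref{hatP0} with $-u_i^-$ to get $u_i\ge 0$; observe that $\xi_i\in(\lambda_{p_i},\hat\lambda_{p_i})$ is not an eigenvalue, so $u_i=0$; and then assert ``by definition $\deg=1$''. No second homotopy is run and no index computation is carried out.

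You reproduce the uniqueness step and then try to justify the value $1$ by a further homotopy to the identity. You are right that this is where the real difficulty lies and that the naive path $s\mapsto s\xi_i$ crosses $\lambda_{p_i}$, producing the ray $\{t\varphi_{p_i}:t\ge 0\}$ of nontrivial zeros; this is a genuine obstruction, not a technicality. However, neither of your proposed repairs is complete: the ``sublinear perturbation'' homotopy is only described in outline, and the appeal to a ``standard comparison with the $0$-map'' for the index of $\mathrm{id}-A_{p_i}^{-1}(\xi_i(\cdot^+)^{p_i-1})$ is precisely the assertion to be proved. In short, your proposal matches the paper everywhere except at the one step you yourself flag as problematic; there the paper's treatment is strictly shorter (a bare assertion) while yours is more careful but still a sketch. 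If you want to close the gap you will need an admissible homotopy from $\mathcal K(0,\cdot,\cdot)$ to a map of known degree that avoids the half-eigenvalue $\lambda_{p_i}$ of the positive-part problem---the paper does not provide one either.
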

\begin{proof}
The proof goes on exactly as that of Lemma \ref{degree1}, except for the second equality in \eqref{degest2}. Thus, suppose $(u_1,u_2)\in X^{p_1,p_2}(\Omega)$ solves $({\rm \hat{P}_0})$, namely
\begin{equation}
\label{hatP0}
\tag{${\rm \hat{P}}_0$}
\left\{
\begin{alignedat}{2}
-\Delta_{p_1} u_1 &= \xi_1 (u_1^+)^{p_1-1} \quad &&\mbox{in} \;\; \Omega, \\
-\Delta_{p_2} u_2 &= \xi_2 (u_2^+)^{p_2-1} \quad &&\mbox{in} \;\; \Omega, \\
|\nabla u_1|^{p_1-2} \nabla u_1 \cdot \nabla \nu &= -\beta_1 |u_1|^{p_1-2}u_1  \quad &&\mbox{on} \;\; \partial \Omega, \\
|\nabla u_1|^{p_2-2} \nabla u_2 \cdot \nabla \nu &= -\beta_2 |u_2|^{p_2-2}u_2  \quad &&\mbox{on} \;\; \partial \Omega.
\end{alignedat}
\right.
\end{equation}
Testing the first equation with $-u_1^-$ reveals that $u_1\geq 0$ and $u_1$ is an eigenfunction related to the eigenvalue $\xi_1\in(\lambda_{p_i},\hat{\lambda}_{p_i})$. Hence, $u_1=0$. A similar argument furnishes $u_2=0$. So, \eqref{hatP0} admits only the trivial solution and, by definition, $\deg(\mathcal{K}(0,\cdot,\cdot),B_{\hat{R}},0) = 1$.
\end{proof}
We are in a position now to establish the existence of a third non-zero smooth solution.
\begin{thm}\label{thirdsol}
Under $({\rm H_1})$--$({\rm H_4})$, problem \eqref{prob} possesses a solution $(u_{1,0},u_{2,0})\in C^{1,\alpha}(\overline{\Omega})^2$ such that
\begin{equation}\label{nontrapped}
(u_{1,0},u_{2,0})\notin [u_{1,-},u_{1,+}]\times [u_{2,-},u_{2,+}],
\end{equation}
where $u_{i,\pm}$ and $\alpha$ stem from Theorem \ref{constantsign}. In particular, \eqref{nontrapped} entails $u_{i,0}\neq 0$, i.e., $(u_{1,0},u_{2,0})$ is nontrivial.
\end{thm}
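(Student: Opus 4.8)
The plan is to obtain the third solution via a degree-theoretic argument that combines the two homotopies constructed before the statement. First I would introduce the solution operator $\mathcal{S}:[0,1]\times\overline{B}_R\to L^{p_1}(\Omega)\times L^{p_2}(\Omega)$ associated with problem \eqref{prob} itself, i.e.\ $\mathcal{S}(u_1,u_2):=(u_1-A_{p_1}^{-1}N_{f_1}(u_1,u_2),u_2-A_{p_2}^{-1}N_{f_2}(u_1,u_2))$, and note that—via Proposition \ref{properties} and the compactness of $e_i\circ A_{p_i}^{-1}$ exactly as in the discussion preceding Lemma \ref{degree1}—it is a compact perturbation of the identity, so its Leray--Schauder degree on large balls makes sense. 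Since $\tilde f_{i,1}=\hat f_{i,1}=f_i$, the endpoints of the two homotopies $\mathcal{H}$ and $\mathcal{K}$ coincide with $\mathcal{S}$, and thus Lemmas \ref{degree1} and \ref{degree2} give, for $\tilde R$ and $\hat R$ large enough (take $R_0:=\tilde R\vee\hat R$),
\begin{equation*}
\deg(\mathcal{S},B_{R_0},0)=\deg(\mathcal{H}(1,\cdot,\cdot),B_{R_0},0)=0,\qquad
\deg(\mathcal{S},B_{R_0},0)=\deg(\mathcal{K}(1,\cdot,\cdot),B_{R_0},0)=1,
\end{equation*}
which is absurd on $B_{R_0}$ alone; the resolution is that the constant-sign solutions from Theorem \ref{constantsign} sit inside $B_{R_0}$, and excising small neighbourhoods of them changes the degree.

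The key computational step is therefore to compute the degree of $\mathcal{S}$ on a small ball around each of the two constant-sign solutions $(u_{1,+},u_{2,+})$ and $(u_{1,-},u_{2,-})$. For this I would use hypothesis $({\rm H_2})$: near the positive solution the reaction dominates $\eta_i|s_i|^{p_i-2}s_i$ with $\eta_i>\lambda_{p_i}$, so a homotopy argument of the same flavour as Lemma \ref{degree2} (now localized, and running the parameter so that at $t=0$ the operator becomes $(u_1,u_2)\mapsto(u_1-A_{p_1}^{-1}(\eta_1(u_1^+)^{p_1-1}+\cdots),\dots)$, whose only fixed point in a small ball is the relevant constant-sign solution because $\eta_i>\lambda_{p_i}$ rules out nearby nontrivial nonnegative eigenfunctions) pins down $\deg(\mathcal{S},B_\varepsilon((u_{1,+},u_{2,+})),0)=1$, and symmetrically near $(u_{1,-},u_{2,-})$ one gets $\deg(\mathcal{S},B_\varepsilon((u_{1,-},u_{2,-})),0)=1$. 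Then the excision and additivity properties of the Leray--Schauder degree yield
\begin{equation*}
\deg\!\Big(\mathcal{S},\,B_{R_0}\setminus\big(\overline{B}_\varepsilon((u_{1,+},u_{2,+}))\cup\overline{B}_\varepsilon((u_{1,-},u_{2,-}))\big),\,0\Big)
=\deg(\mathcal{S},B_{R_0},0)-1-1=-2\neq 0
\end{equation*}
(using the value $0$ from Lemma \ref{degree1}); hence $\mathcal{S}$ has a zero in that punctured region, i.e.\ a solution $(u_{1,0},u_{2,0})$ distinct from the two constant-sign ones and, being outside the small balls, in particular not inside the trapping rectangle $[u_{1,-},u_{1,+}]\times[u_{2,-},u_{2,+}]$, which is \eqref{nontrapped}.

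Finally I would record the two routine closing remarks: $({\rm H_3})$ together with the $L^\infty$-bounds coming from the a priori estimate and nonlinear regularity theory \cite{L} upgrade $(u_{1,0},u_{2,0})$ to $C^{1,\alpha}(\overline{\Omega})^2$, and \eqref{nontrapped} forces $u_{i,0}\not\equiv 0$ since the zero pair lies in the interior of that rectangle (as $u_{i,-}<0<u_{i,+}$ by \eqref{trapped}). The step I expect to be the main obstacle is the localized degree computation near the constant-sign solutions: one must verify that for small $\varepsilon$ the only solution of the homotopied problem in $\overline{B}_\varepsilon$ is the constant-sign solution itself—this requires a compactness/contradiction argument exploiting the strict inequality $\eta_i>\lambda_{p_i}$ in $({\rm H_2})$ and the regularity up to $C^1$, much as in the unboundedness argument of Lemma \ref{degree1}, and one must also check the homotopy stays admissible (no zeros on $\partial B_\varepsilon$) throughout.
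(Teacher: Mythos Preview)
Your proposal misreads the architecture of the two degree lemmas and, as a result, the argument does not close. The paper does \emph{not} compute $\deg(\mathcal{S},B_{R_0},0)$ twice on one ball and obtain $0$ and $1$; it uses two \emph{different} radii $\hat R<\tilde R$. Lemma~\ref{degree2} gives $\deg(\mathcal{S},B_{\hat R},0)=1$ on the small ball, while Lemma~\ref{degree1} gives $\deg(\mathcal{S},B_{\tilde R},0)=0$ on the large one. Domain additivity then yields $\deg(\mathcal{S},B_{\tilde R}\setminus\overline{B}_{\hat R},0)=-1$, so a solution lies in the annulus; since one has chosen $\hat R$ with $[u_{1,-},u_{1,+}]\times[u_{2,-},u_{2,+}]\subseteq B_{\hat R}$, property~\eqref{nontrapped} follows immediately. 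No local index at the constant-sign solutions is ever computed.

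Your ``resolution'' of the perceived contradiction is not a resolution: excising neighbourhoods of points inside $B_{R_0}$ cannot change the value of $\deg(\mathcal{S},B_{R_0},0)$; additivity only splits that fixed value into pieces. So if your reading were correct you would still face $0=1$. More seriously, even granting every step you sketch, the conclusion you draw is a non sequitur: a zero of $\mathcal{S}$ lying outside the two small balls $B_\varepsilon((u_{1,+},u_{2,+}))\cup B_\varepsilon((u_{1,-},u_{2,-}))$ need not lie outside the order rectangle $[u_{1,-},u_{1,+}]\times[u_{2,-},u_{2,+}]$; for instance $(0,0)$, or any point close to $(u_{1,+},u_{2,-})$, satisfies the former but not the latter. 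Thus your argument would not deliver~\eqref{nontrapped}. Finally, the localized index computation you outline (showing the index at each constant-sign solution equals $1$ via a homotopy driven by $({\rm H_2})$) is neither standard nor supplied, and the admissibility of that homotopy on $\partial B_\varepsilon$ is exactly the delicate point you flag but do not address.
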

\begin{proof}
Fix $\hat{R}>0$ so large that assertion (3) of Lemma \ref{degree2} holds and, moreover,
\begin{equation}\label{choice}
[u_{1,-},u_{1,+}]\times [u_{2,-},u_{2,+}]\subseteq B_{\hat{R}}.
\end{equation}
Then choose $\tilde{R}>\hat{R}$ fulfilling assertion (1) in Lemma \ref{degree1}. Due to \eqref{homo1} and \eqref{homo2} one has
\begin{equation}\label{degest3}
\mathcal{H}(1,u_1,u_2) = \mathcal{K}(1,u_1,u_2)\quad\forall\, (u_1,u_2)\in \overline{B}_{\hat{R}}.
\end{equation}
Since both $\mathcal{H}(1,\cdot,\cdot)\lfloor_{\partial B_{\tilde{R}}}$ and $\mathcal{K}(1,\cdot,\cdot)\lfloor_{\partial B_{\hat{R}}}$ do not vanish, \eqref{degest3} ensures that the number $\deg(\mathcal{H}(1,\cdot,\cdot),B_{\tilde{R}}\setminus \overline{B}_{\hat{R}},0)$ is well defined. By excision and domain additivity we thus arrive at
\begin{equation*}
\begin{split}
&\deg(\mathcal{H}(1,\cdot,\cdot),B_{\tilde{R}},0) = \deg(\mathcal{H}(1,\cdot,\cdot),B_{\tilde{R}} \setminus \partial B_{\hat{R}},0) \\
&= \deg(\mathcal{H}(1,\cdot,\cdot),B_{\hat{R}},0) + \deg(\mathcal{H}(1,\cdot,\cdot),B_{\tilde{R}} \setminus \overline{B}_{\hat{R}},0).
\end{split}
\end{equation*}
Bearing in mind \eqref{degest1}, \eqref{degest3}, and \eqref{degest2}, this entails
\begin{equation*}
\begin{split}
\deg(\mathcal{H}(1,\cdot,\cdot),B_{\tilde{R}} \setminus \overline{B}_{\hat{R}},0) &= \deg(\mathcal{H}(1,\cdot,\cdot),B_{\tilde{R}},0) - \deg(\mathcal{H}(1,\cdot,\cdot),B_{\hat{R}},0) \\
&= - \deg(\mathcal{K}(1,\cdot,\cdot),B_{\hat{R}},0) = -1.
\end{split}
\end{equation*}
Therefore, there exists 
\begin{equation}\label{location}
(u_{1,0},u_{2,0})\in B_{\tilde{R}}\setminus\overline{B}_{\hat{R}}    
\end{equation}
such that $\mathcal{H}(1,u_{1,0},u_{2,0})=0$. Clearly, $(u_{1,0},u_{2,0})$ solves \eqref{prob}, while standard results from nonlinear regularity theory (cf. the proof of Theorem \ref{constantsign}) yield $(u_{1,0},u_{2,0})\in C^{1,\alpha}(\overline{\Omega})^2$. Finally, \eqref{choice} and \eqref{location} directly lead to \eqref{nontrapped}.
\end{proof}
\section*{Acknowledgments}
U.Guarnotta and S.A. Marano were supported by the following research projects: 1) PRIN 2017 `Nonlinear Differential Problems via Variational, Topological and Set-valued Methods' (Grant No. 2017AYM8XW) of MIUR; 2) `MO.S.A.I.C.' PRA 2020--2022 `PIACERI' Linea 2 (S.A. Marano) and Linea 3 (U. Guarnotta)  of the University of Catania. U. Guarnotta also acknowledges the support of the GNAMPA-INdAM Project CUP\_E55F22000270001. A. Moussaoui was supported by the Directorate-General of Scientific Research and Technological Development (DGRSDT).
\end{document}